\newtheorem{theorem}{Theorem}
\newtheorem{prop}[theorem]{Proposition}
\newtheorem{cor}[theorem]{Corollary}
\newtheorem{lemma}[theorem]{Lemma}
\theoremstyle{definition}
\newtheorem{remark}[theorem]{Remark}
\newtheorem{example}[theorem]{Example}
\newcommand{\ora}{\overrightarrow}
\DeclareMathOperator*{\argmax}{arg\,max}
\DeclareMathOperator*{\argmin}{arg\,min}
\begin{document}

\title{Witnessing and Guiding Sets of Tangles}
\author[A Seibt]{Annegret Seibt}
\address{Universität Hamburg, Germany}
\email{annegret.seibt@kuleuven.be}
\date{}
\maketitle
\begingroup
\renewcommand\thefootnote{}\footnotetext{This is the full version of a manuscript under review.}
\addtocounter{footnote}{-1}
\endgroup

\begin{abstract}
Tangles offer a way to indirectly but precisely capture cluster-like though possibly fuzzy substructures in discrete data.

In this paper, we analyze witnessing and guiding sets of tangles that can help to find proper cluster candidates for given tangles.

We show that every $k$-tangle has a witnessing set whose size is bounded in an exponential function in $k$ which improves a result of Grohe and Schweizer~\cite{grohe2015}.

Further, we generalize a result of Diestel, Elbracht and Jacobs~\cite{diestel2021} by providing a characterization of tangles that have a guiding function of some given reliability.

\end{abstract}

\section{Introduction}

\subsection{Tangles}
Tangles are a relatively novel tool for capturing cluster-like though possibly fuzzy substructures in discrete data in an indirect but precise way.
The main idea of tangles is completely different from traditional clustering approaches:
Tangles are not sets of objects with similar features, but sets of features that occur together consistently.
A set of features will be called `consistent' if, for every three features in it, there is at least one object that has these three features.
This property is weak enough for the discovery of fuzzy clusters (with objects that are not pairwise similar).
But at the same time it is strong enough to prove strong structural theorems about tangles~\cite{robertsonseymour}.

The origin of tangles is in the twenty papers spanning proof of Robertson and Seymour's graph minor theorem~\cite{robertsonseymour} where tangles play a central role in describing the global structure of a graph~\cite{robertson2004}.
They have since been generalized from their initial definition in the context of graphs to a purely algebraic concept as tangles of `abstract separation systems'~\cite{diestel2018}.
This opened up a much wider scope of application for tangles while at the same time preserving the validity of the two main theorems of tangle theory, the tree-of-tangles theorem~\cite{diestel2019profiles} and the tangle-tree duality theorem~\cite{diestel2021duality}.
Recently, Diestel~\cite{diestel2024} proposed the use of tangles for data clustering problems and indicated applications in various areas including natural sciences (e.g. clustering illnesses, target-driven drug development), social sciences (e.g. discovering mindsets, identifying meaning) and data sciences (e.g. decoding images and texts)~\cite{diestel2024}.
The practical usefulness and flexibility of tangles for such clustering applications has been demonstrated by Klepper et al.\ \cite{klepper2023} who elaborated an algorithmic framework that translates tangles into a machine learning context.

As Klepper et al.~\cite{klepper2023}, we consider tangles of bipartitions.
Suppose we are given a finite groundset $V$ of points (corresponding to the objects we want to cluster) and a set $S$ of bipartitions $\{A,A^c\}$ of $V$ that we refer to as \emph{separations}.
For each separation $\{A,A^c\}$, there are two \emph{oriented separations}, $A$ and $A^c$, that we can imagine as corresponding to two inverse features shared by all the points in $A$ and $A^c$, respectively.
A \emph{tangle} of $S$ is a set $\tau$ that contains precisely one of $A$ and $A^c$ for every separation $\{A,A^c\} \in S$ and satisfies the following \emph{consistency condition}:\footnote{For an explanation why we consider any three oriented separations here and not only two or even four, see \cite[ch.\ 7.3]{diestel2024}.}
\begin{equation}
\label{conscond}
\forall \{A_1, A_2, A_3\} \subset \tau: \quad A_1 \cap A_2 \cap A_3 \ne \emptyset.
\end{equation}
Note that a tangle is a set of oriented separations. So we can imagine a tangle as representing a set of features that describe a type of object.
This type can be rather idealistic since the consistency condition does not imply that there is a point in $V$ having all the features in $\tau$. It does not even imply that there is a point in $V$ that satisfies most of the features in $\tau$. In fact, it only says that for any three features in $\tau$ there is one point in $V$ that has these three features.
This makes tangles especially suitable for capturing cluster-like sets of possibly pairwise quite different objects that still all belong to a certain type.
Let $C \subset V$ be such a set. Then maybe we do not find a set of features such that all objects in $C$ share all those features or at least most of them.
But we might find a set of features such that each of them is satisfied by most of $C$, say by more than two thirds of it. Then the set of all the oriented separations corresponding to these features is a tangle $\tau$: Let $\{A_1, A_2, A_3\} \subset \tau$. Then $|A_1^c \cup A_2^c \cup A_3^c| < 3 \cdot (1 -\frac 2 3)|C| = |C|$ and, thus, $A_1 \cap A_2 \cap A_3 \ne \emptyset$.

Since clusters seem to give rise to tangles, we can use tangles to discover clusters in given data.
But this raises the question of what set to choose as the belonging cluster to a given tangle $\tau$ if there is a proper one at all.
In this paper, we contribute to a more structural than algorithmic answer to this question (for the algorithmic one, see~\cite{klepper2023}).
The oriented separations constituting $\tau$ give hints of where in $V$ the cluster is lying.
But for instance, $\bigcap \tau$ is not an appropriate candidate for a cluster as for all `interesting' tangles this intersection will be empty: For every $v \in V$ the \emph{principal tangle} $\tau_v$ is the set of all oriented separations that contain $v$. Then we have $v \in \bigcap \tau_v$. These principal tangles always exist and, thus, do not really carry relevant information.
In order to find a more suitable candidate, we consider two tangle related sets that -- in combination -- could finally make up such a candidate.

\subsection{Witnessing sets of tangles}

For every triple $\{A_1, A_2, A_3\}$ of oriented separations, we say that a point $v \in A_1 \cap A_2 \cap A_3$ \emph{witnesses} the triple $\{A_1, A_2, A_3\}$.
Suppose we are given a tangle $\tau$ of $S$. Then a set $W \subset V$ \emph{witnesses} $\tau$ if for all $\{A_1, A_2, A_3\} \subset \tau$ there is a point in $W$ that witnesses $\{A_1, A_2, A_3\}$.
The consistency condition \eqref{conscond} ensures that the groundset $V$ is such a \emph{witnessing set} of every tangle $\tau$.
Thus, our aim is to show the existence of witnessing sets of bounded size.

The tangles and hence the witnessing sets that we find strongly depend on what set of separations $S$ of $V$ we choose. So we can only hope to bound the size of a witnessing set with respect to the set $S$.
For this, we can use that usually one \emph{orders} the separations in $S$ by assigning non-negative integers to them.
When thinking of the oriented separations as associated to features, we basically want to assign low orders to oriented separations that correspond to fundamental features and high orders to oriented separations that correspond to very specific or no features.
There are many ways how this can be done in various particular contexts \cite[ch.\ 9]{diestel2024}.
In order to make the two main tangle theorems applicable, order functions are often required to be \emph{submodular} (see \cref{prelimsubmod}).
A tangle that orients all the separations of order less than $k$ with respect to a submodular order function is called a $k$-\emph{tangle} .

Grohe and Schweitzer~\cite[Lemma 3.1]{grohe2015} showed that every $k$-tangle has a witnes\-sing set of size bounded in terms of the order $k$. The upper bound $\theta(3k-2)$ they found for the size of a minimal witnessing set is recursively defined by $\theta(0) := 0$ and $\theta(i + 1) := \theta(i)+3^{\theta(i)}$ for $i \in \mathbb N_0$.
We improve this bound to an exponential one.

\begin{theorem}
\label{intch2}
Let $S$ be a set of separations of any set $V$ and let $\tau$ be a $k$-tangle of $S$ regarding to some submodular order function $f$. Then $\tau$ has a witnessing set of size at most $\frac {3^k - 1} 2$.
\end{theorem}

We provide two completely different proofs in \cref{uppb1} and \cref{uppb2}.
The first one follows the same induction approach as the proof by Grohe and Schweitzer and yields the slightly weaker upper bound $\tfrac 1 2 (3^{3k-2}-3^k) + k$ for the size of a minimal witnessing set.

In \cref{lowb}, we give an example of a k-tangle that has no witnessing set of size less than $\binom k 3$.
This shows that any function in k that bounds, for all k-tangles, the size of a minimal witnessing set from above cannot be smaller than $\binom k 3$.

\subsection{Guiding sets of tangles}
A witnessing set witnesses that a tangle $\tau$ is indeed a tangle, i.e.\ it shows that $\tau$ satisfies \eqref{conscond}.
Complementary to this, the second tangle-related set we consider defines its tangle in that it decides by majority vote which orientation of a separation lies in $\tau$.
Formally, we say that a set $G \subset V$ \emph{guides} a tangle $\tau$ if, for every oriented separation $A \in \tau$, $|A \cap G| > \frac 1 2 |G|$.
If, for some $\rho \in [0, 1]$, $|A \cap G| > \rho |G|$, for every $A \in \tau$, then $G$ has \emph{reliability} $\rho$.
So a guiding set is a subset of $V$ with reliability $\rho > \tfrac 1 2$.
Contrary to witnessing sets, not every tangle has a guiding set~\cite{diestel2021}.
So our aim concerning guiding sets is to find out under what circumstances a tangle has one, and if so we aim for maximum reliability.

We approach this question by starting from \emph{guiding functions}:
We say that a function $g : V \rightarrow \mathbb R_+$ with $\sum_{v \in V} g(v) = 1$ \emph{guides} a tangle $\tau$ if its \emph{reliability} satisfies
\begin{equation*}
\rho_g := \min \left\{ \sum_{v \in A} g(v): A \in \tau \right\} > \tfrac 1 2.
\end{equation*}
This is inspired by~\cite{elbracht2018} where Elbracht, Kneip and Tegen showed that such a guiding function exists for every graph tangle.
But they also provided an example showing that tangles on bipartitions need not always have a guiding function.
In \cref{guidfct}, we generalize a result of Diestel, Elbracht and Jacobs \cite[Thm.~8]{diestel2021} about the existence of guiding functions for tangles (of bipartitions) with a given reliability, and explain how the maximum reliability of a guiding function can be determined.

If a set $G$ guides a tangle $\tau$ then $g : V \rightarrow \mathbb R_+$ given by $g(v) := \frac 1 {|G|} \mathbb 1_{\{v \in G\}}$ is a guiding function of $\tau$. So guiding functions are a generalization of guiding sets.
Thus, the non-existence of a guiding function of a tangle $\tau$ also implies the non-existence of a guiding set of $\tau$.
In \cref{existguid}, we use a probabilistic argument to show that, given some technical properties, the existence of a guiding function with sufficiently high reliability implies the existence of a guiding set.

\subsection{Interplay of witnessing and guiding sets of tangles}

After dealing with witnessing and guiding sets separately, we want to finally pursue our goal of combining them and finding proper clusters associated to tangles.
Diestel suggests the following sets as the tangle induced clusters of $V$: all subsets of $V$ that guide a non-principal and non-fake\footnote{Requiring \emph{non-fakeness} rules out so-called \emph{black hole tangles} that do not spring from a cluster (see \cref{prelimsubmod}).} tangle with maximum reliability~\cite[ch.\ 14.2]{diestel2024}.
Moreover, he contemplates to ask that the sets should witness their tangle, too.
In \cref{interplay}, we give a counterexample that shows that guiding a non-principal and non-fake tangle with maximum reliability does not imply witnessing this tangle.
So requiring clusters to be witnessing their tangles as well could be a reasonable addition.

Note that according to Diestel's suggestion, what defines a cluster are not properties of its single points but properties of the whole set.
This leads to \emph{fuzzy clusters}: clusters that are (other than soft clusters) concrete subsets of $V$ but (unlike hard clusters) need not be disjoint.
Notice further that such clusters do not have to exist for every tangle.
As mentioned before, there are clusters without guiding sets~\cite{diestel2021}.
Besides in \cref{interplay}, we provide a counterexample showing that having a witnessing set (which is smaller than $V$) and a guiding set does not imply that a tangle has a set that is both witnessing and guiding.

\section{Preliminaries}
\label{prelim}

In this section, we recall the important concepts, definitions and notations that we will use in this paper.
They are mainly taken from \cite{diestel2024} and slightly simplified as we are only dealing with the special case of tangles on bipartitions.

\subsection{Tangles}
\label{prelimtangles}
Suppose we are given a finite groundset $V$ of points and a set $S$ of bipartitions $\{A,A^c\}$ of $V$.
Throughout this paper, we always assume $V$ and $S$ to be finite.
Following the more general notion of tangles, we call the bipartitions in $S$ \emph{separations}.
Each separation $\{A,A^c\} \in S$ has two \emph{sides}, $A$ and $A^c$.
By choosing one of its two sides, we can \emph{orient} a separation.
We denote the set of all sides of the separations in $S$ by  
\begin{equation}
\ora S := \bigcup S
\end{equation}
and refer to them as \emph{oriented separations}.\footnote{The set of all sides does not coincide with the set of the oriented separations in the general notion of tangles. This only happens here as $S$ consists of bipartitions of $V$.}
A subset $\tau \subset \ora S$ containing exactly one of the two sides of every separation $\{A,A^c\} \in S$ is a \emph{tangle} of $S$ if it satisfies the aforementioned \emph{consistency condition}:
\begin{equation*}
\tag{\ref{conscond}}
\forall \{A_1, A_2, A_3\} \subset \tau: \quad A_1 \cap A_2 \cap A_3 \ne \emptyset.
\end{equation*}
If for some separation $s = \{A,A^c\} \in S$, we have $A \in \tau$ for some tangle $\tau$, then we call $\tau(s):= A$ the \emph{big side} of $s$ and $A^c$ the \emph{small side} of $s$ (regarding to $\tau$).
So we can rephrase the consistency condition to: The intersections of any three big sides of separations oriented by a tangle must not be empty.

\subsection{Order functions and submodularity}
\label{prelimsubmod}
When talking about maximal or minimal oriented separations, we refer to the subset relation ``$\subset$'' that partially orders the set $\ora S$, i.e.\ for $A,B \subset S$, we use the definition $A \le B \Leftrightarrow A \subset B$. 
Independently of this, we call any function $f:S \rightarrow \mathbb N_0$ an \emph{order function} on $S$.
Consequently, we refer to $f(\{A,A^c\})$ as the \emph{order} of the separation $\{A,A^c\} \in S$ and to $f(A):= f(\{A,A^c\})$ as the order of the oriented separation $A$.
An order function $f:S^* \rightarrow \mathbb N_0$ on the set $S^*$ of all separations of $V$ is \emph{submodular} if for all oriented separations $A,B \in \ora {S^*}$
\begin{equation}
\label{submodcond}
f(A \cap B) + f(A \cup B) \le f(A) + f(B).
\end{equation}
We refer to $(A \cap B)$ and $(A \cup B)$ as the two \emph{corners} of the oriented separations $A$ and $B$.
Suppose we are given a submodular order function $f$.
For $k \ge 1$, we denote the set of all separations in a set $S$ of order less than $k$ by
\begin{equation}
S_k := \{s \in S: f(s) < k\}.
\end{equation}
A tangle of $S_k$ regarding to a submodular order function is called a \emph{tangle of order $k$} or a $k$-\emph{tangle}.

We refer to a set of separations $S$ as \emph{submodular} if for all $A, B \in \ora S$ at least one of the two corners $(A \cap B)$ or $(A \cup B)$ is in $S$ as well.
The submodularity condition for order functions implies that the sets $S^*_k$ with respect to the set $S^*$ of all separations of $V$ are submodular: Let $A, B \in S^*_k$. Then we have $f(A \cap B) + f(A \cup B) \le f(A) + f(B) \le 2k - 2$. But if both $(A \cap B)$ and $(A \cup B)$ were not in $S^*_k$, then $f(A \cap B) + f(A \cup B) \ge 2 k$ -- which is a contradiction.

The sets $S_k$ do not have to be defined with respect to the set $S^*$ of all separations of $V$ but they can be defined for any subset $S \subset S^*$. However, a $k$-tangle (of $S_k$ for some $S \subset S^*$) that cannot be extended to a $k$-tangle of $S^*_k$ regarding to the set $S^*$ of all separations of $V$ is called a \emph{fake} tangle.

\subsection{Witnessing and guiding sets}

We call a set $U \subset V$ a \emph{cover} of a tangle $\tau$ of $S$ if $U \cap A \ne \emptyset$ for all $A \in \tau$.
Similar to this, we refer to a set $W \subset V$ as \emph{witnessing} $\tau$ if
\begin{equation}
\label{witncond}
\forall \{A_1, A_2, A_3\} \subset \tau: \quad W \cap (A_1 \cap A_2 \cap A_3) \ne \emptyset.
\end{equation}
This condition resembles the consistency condition \eqref{conscond} for tangles which shows that the whole groundset $V$ is a witnessing set of every tangle.
We say that the triple $\{A_1, A_2, A_3\} \subset \ora S$ of oriented separations is \emph{witnessed} by a point $v \in V$ if $v \in A_1 \cap A_2 \cap A_3$.
Further, we say that $\{A_1, A_2, A_3\}$ is witnessed by a set $W \subset V$ if there is a point $v \in W$ that witnesses $\{A_1, A_2, A_3\}$.
A tangle $\tau$ with a witnessing set (or cover) of size one is called a \emph{principal tangle}.
A principal tangle $\tau$ is characterized by $\bigcap \tau \ne \emptyset$.
There are always principal tangles since for every $v \in V$ we can construct $\tau_v := \{A \in \ora S: v \in A\}$ with $v \in \bigcap \tau_v$.

Given a tangle $\tau$ on $V$, the \emph{reliability} of a set $G \subset V$ is defined by
\begin{equation}
\label{setreliab}
\rho_G:= \min\left\{\tfrac{|G \cap A|}{|G|}: A \in \tau\right\}.
\end{equation}
If $\rho_G > \tfrac 1 2$ then $G$ is called a \emph{guiding set} of $\tau$.
We can generalize reliability to functions on $V$:
We define the reliability of a function $g: V \rightarrow \mathbb R_+$ with $\sum_{v \in V} g(v) = 1$ by
\begin{equation}
\label{fctreliab}
\rho_g := \min \left\{ \sum_{v \in A} g(v): A \in \tau \right\}.
\end{equation}
Analogously to the definition of guiding sets, we call a function $g: V \rightarrow \mathbb R_+$ with $\sum_{v \in V} g(v) = 1$ a \emph{guiding function} if $\rho_g > \tfrac 1 2$.
Every guiding set $G$ of a tangle $\tau$ induces a guiding function $g$ of $\tau$ with the same reliability as $G$ by $g(v) := \tfrac 1{|G|} \mathbb 1_{v \in G}$.

\section{Witnessing sets of $k$-tangles}
\label{witnsets}
In this section, we give two proofs for an exponential upper bound of the size of a minimal witnessing set.
Further, we provide an example showing that for every order $k \ge 1$ there is a $k$-tangle whose minimal witnessing set has size $\binom k 3$ in \cref{lowb}.

\subsection{First upper bound}
\label{uppb1}

Grohe and Schweitzer~\cite{grohe2015} showed that for every $k$-tangle there is a witnessing set of size bounded in terms of the order $k$.
They proved the upper bound $\theta(3k-2)$ for the size of a  minimal witnessing set that is recursively defined by $\theta(0) := 0$ and $\theta(i + 1) := \theta(i)+3^{\theta(i)}$ for $i \in \mathbb N_0$.
This bound is the result of a proof by induction:
For a given $k$-tangle regarding to a submodular order function $f$, Grohe and Schweitzer construct subsets $W_1 \subset W_2 \subset \dots \subset V$ of the groundset such that the points in $W_i$ witness all triples of oriented separations $\{A_1,A_2,A_3\} \subset \tau$ with $f(A_1) + f(A_2) + f(A_3) < i$.
In this section, we follow the same approach.
But in our induction step, we manage to find bigger sets of triples of oriented separations that are witnessed by the same point $v \in V$.
So in every step, we can get along with adding fewer points to $W_i$ than in the original proof, which will result in an exponential bound.

Grohe and Schweitzer~\cite[ch. 3]{grohe2015} mentioned the following proposition providing an upper bound for the size of a cover in terms of the order of a tangle.

\begin{prop}
\label{coversize}
Let $S$ be a set of separations of any set $V$ and let $\tau$ be a $k$-tangle of $S$ regarding to some submodular order function $f$. Then $\tau$ has a cover of size at most $k$.
\end{prop}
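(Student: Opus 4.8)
The plan is to build the cover one point at a time while keeping track of how many separations remain "uncovered", and to use submodularity to show that each new point we add can be chosen so that it kills at least one more separation than the previous rounds forced us to leave open. More precisely, I would argue by induction on $k$. The base case $k=1$ is trivial: $S_1=\emptyset$ (there are no separations of order $<1$ if orders are non-negative integers, or at worst the tangle orients nothing), so the empty set, or a single point, covers $\tau$.

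For the induction step, suppose the claim holds for $k-1$. Given a $k$-tangle $\tau$ of $S=S_k$, consider the restriction $\tau'$ of $\tau$ to $S_{k-1}$; this is a $(k-1)$-tangle, so by the induction hypothesis it has a cover $U'$ with $|U'|\le k-1$. The separations of order exactly $k-1$ are the ones possibly not yet covered by $U'$. The key step is to show that a \emph{single} additional point suffices to cover all of these. To do this I would look at the big sides $\tau(s)$ for all $s\in S$ with $f(s)=k-1$ and show that their intersection is nonempty: pick any point $v$ in that intersection and set $U:=U'\cup\{v\}$, which has size at most $k$.

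The hard part — and the place where submodularity does the real work — is establishing that $\bigcap\{\tau(s): f(s)=k-1\}\ne\emptyset$, i.e.\ that the big sides of the top-order separations have a common point. The natural approach is to take two such big sides $A=\tau(s)$ and $B=\tau(t)$ with $f(s)=f(t)=k-1$ and consider their corners: by submodularity, $f(A\cap B)+f(A\cup B)\le 2(k-1)$, so at least one corner has order $\le k-1$, and in fact if $A\cap B\notin S$ one shows $A\cup B$ (or rather the separation it determines) has order $<k$, hence lies in $S_k$, is oriented by $\tau$, and — being a superset of the big sides $A$ and $B$ — must itself be oriented towards $A\cup B$ by consistency (the opposite side is contained in $A^c$, and one derives a contradiction with \eqref{conscond} using that $A,B$ are big sides). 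Iterating this "uncrossing" over all top-order separations, one replaces the family of big sides by a nested/directed family whose intersection is forced to be nonempty by the consistency condition applied to triples; equivalently, one shows the family of big sides of order $k-1$ has the property that any two of them, together with any third big side, have nonempty common intersection, and then a Helly-type or direct minimal-element argument on this finite family yields a common point. I expect the bookkeeping around which corner to keep, and verifying that the retained corner is still oriented the "big" way by $\tau$, to be the main obstacle; everything else is routine.
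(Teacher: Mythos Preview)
The paper does not give its own proof of this proposition; it is stated with attribution to Grohe and Schweitzer, so there is no in-paper argument to compare against. Nonetheless, your proposal has a genuine gap.

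Your central claim is that the big sides of all separations of order exactly $k-1$ share a common point, so that one additional vertex on top of the inductive cover $U'$ suffices. This is false. The paper's own \cref{hypwbsp} together with \cref{nonfakeprop} supplies a counterexample already for $k=4$: with $V=\{T\subseteq\{1,2,3,4\}:|T|=3\}$ and $A_j=\{T\in V:j\in T\}$, the extension $\tau^*$ produced by \cref{nonfakeprop} is a $4$-tangle in which each $\{A_j,A_j^c\}$ has order exactly $3=k-1$, yet $\bigcap_{j=1}^4 A_j=\emptyset$. The ``Helly-type'' step you hope for cannot work: the consistency condition \eqref{conscond} only guarantees nonempty \emph{triple} intersections, and without convexity or some lattice hypothesis this does not propagate to the full intersection. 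Your uncrossing also degenerates in this example, since $f^*(A_1\cap A_2)=k$ while $A_1\cup A_2=V$, so neither corner yields anything useful.

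The argument that actually works is orthogonal to layering by order. Take a minimal cover $U=\{v_1,\dots,v_m\}$ and, by minimality, pick $A_i\in\tau$ with $A_i\cap U=\{v_i\}$. One then shows by induction on $i$, using submodularity together with consistency, that $f\bigl(\bigcup_{j\le i}A_j\bigr)\le k-i$; since $f\ge 0$ this forces $m\le k$. This is precisely the mechanism behind \cref{intchlord}, and it is the ``order of the running union drops by one at each step'' phenomenon---not a common point among top-order big sides---that yields the bound.
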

For a proof showing that the upper bound provided in \cref{coversize} is sharp, see \cref{lowb}.
As a corollary, this proposition yields an upper bound for the size of minimal witnessing sets of $k$-tangles that extend to $(3k-2)$-tangles.
\begin{cor}
\label{coversizecor}
Let $S$ be a set of separations of any set $V$ and let $\tau$ be a $k$-tangle of $S$ regarding to some submodular order function $f$. If $\tau$ can be extended to a $(3k-2)$-tangle $\tau'$ of $S$ then $\tau$ has a witnessing set of size $3k-2$.
\end{cor}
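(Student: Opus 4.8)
The plan is to apply \cref{coversize} to the \emph{larger} tangle $\tau'$ and then verify that the cover it produces already witnesses $\tau$. Since $\tau'$ is a $(3k-2)$-tangle of $S$, \cref{coversize} yields a cover $U\subseteq V$ of $\tau'$ with $|U|\le 3k-2$, so it suffices to show that $U$ witnesses $\tau$, i.e.\ that $U$ meets $A_1\cap A_2\cap A_3$ for every triple $\{A_1,A_2,A_3\}\subseteq\tau$.

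The bridge from ``$U$ covers $\tau'$'' to ``$U$ witnesses the triple $\{A_1,A_2,A_3\}$'' is the claim that the corner $A_1\cap A_2\cap A_3$ belongs to $\tau'$; once this is known, $U\cap(A_1\cap A_2\cap A_3)\neq\emptyset$ is immediate from $U$ being a cover of $\tau'$. To prove the claim I would first use submodularity twice to keep the relevant corners in $S_{3k-2}$: from $f(A_i)\le k-1$ we get $f(A_1\cap A_2)\le f(A_1)+f(A_2)\le 2k-2$ and then $f(A_1\cap A_2\cap A_3)\le f(A_1\cap A_2)+f(A_3)\le 3k-3<3k-2$, so both $A_1\cap A_2$ and $A_1\cap A_2\cap A_3$ are sides of separations that $\tau'$ orients. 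Then I would fix the orientation with two instances of the consistency condition \eqref{conscond} for $\tau'$: if $\tau'$ contained $(A_1\cap A_2)^c=A_1^c\cup A_2^c$, the triple $\{A_1^c\cup A_2^c,\,A_1,\,A_2\}\subseteq\tau'$ would have empty intersection, a contradiction, so $A_1\cap A_2\in\tau'$; and, repeating this with $A_1\cap A_2$ and $A_3$ in the roles of $A_1$ and $A_2$, if $\tau'$ contained $(A_1\cap A_2\cap A_3)^c$ then $\{(A_1\cap A_2\cap A_3)^c,\,A_1\cap A_2,\,A_3\}\subseteq\tau'$ would have empty intersection, so $A_1\cap A_2\cap A_3\in\tau'$, as claimed.

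I expect the delicate point to be exactly the claim $A_1\cap A_2\cap A_3\in\tau'$ — more precisely, the bookkeeping that the corners $A_1\cap A_2$ and $A_1\cap A_2\cap A_3$ are indeed separations oriented by $\tau'$ (they have order $\le 3k-3$, hence lie in $S_{3k-2}$), and that the two auxiliary triples written above really consist of three distinct oriented separations; the latter can fail only in degenerate situations where a corner coincides with one of $A_1,A_2,A_3$, forcing that separation to be the trivial one $\{V,\emptyset\}$, and such cases are dealt with separately by dropping to a pair or to a single separation. Everything else — the two submodularity estimates and the two applications of consistency — is routine. It is worth noting that the resulting bound $3k-2$ is far smaller than the recursive bound $\theta(3k-2)$ of Grohe and Schweitzer, but it only covers $k$-tangles that extend to $(3k-2)$-tangles; lifting this restriction is precisely what the unconditional estimate of \cref{intch2} achieves.
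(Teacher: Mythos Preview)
Your proposal is correct and follows the same approach as the paper: apply \cref{coversize} to $\tau'$ to obtain a cover of size at most $3k-2$, use submodularity twice to bound $f(A_1\cap A_2\cap A_3)<3k-2$, conclude $A_1\cap A_2\cap A_3\in\tau'$, and deduce that the cover meets every such triple intersection. The paper's proof is terser---it asserts $A_1\cap A_2\cap A_3\in\tau'$ directly from the order bound---whereas you spell out the orientation step via two applications of the consistency condition; this extra care is warranted but does not constitute a different method.
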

\begin{proof}
Let $W$ be a cover of $\tau'$ and let $\{A_1, A_2, A_3\} \in \tau$ be any triple of oriented separations in $\tau$.
Then, by submodularity (i.e.\ using \eqref{submodcond} twice), $f(A_1 \cap A_2 \cap A_3) \le f(A_1) + f(A_2) + f(A_3) < 3k-2$.
Thus, $A_1 \cap A_2 \cap A_3 \in \tau'$.
Therefore, by the definition of a cover $W \cap (A_1 \cap A_2 \cap A_3) \ne \emptyset$.
So, as desired, $W$ is a witnessing set of $\tau$.
\end{proof}
Although the premise of \cref{coversizecor} will be rarely met we can use \cref{coversize} and a version of \cref{coversizecor} in the base case in the proof of the following theorem.

\begin{theorem}
Let $S$ be a set of separations of any set $V$ and let $\tau$ be a $k$-tangle of $S$ regarding to some submodular order function $f$. Then $\tau$ has a witnes\-sing set of size  at most $\tfrac 1 2 (3^{3k-2}-3^k) + k$.
\end{theorem}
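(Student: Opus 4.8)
The plan is to follow the induction scheme of Grohe and Schweitzer, building an increasing chain $W_1 \subset W_2 \subset \dots \subset W_{3k-2} =: W$ such that $W_i$ witnesses every triple $\{A_1,A_2,A_3\} \subset \tau$ with $f(A_1)+f(A_2)+f(A_3) < i$. The final set $W_{3k-2}$ then witnesses all triples in $\tau$, since submodularity forces each $A_j$ to satisfy $f(A_j) < k$, hence $f(A_1)+f(A_2)+f(A_3) \le 3k-3 < 3k-2$. The claimed bound should come out as $|W| \le \sum_{i} (\text{number of points added at step } i)$, and the exponential improvement over $\theta$ will come from adding, at each step, only a number of new points controlled by $3^{|W_{i-1}|}$ with a much slower-growing exponent than in the original argument.

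First I would set up the base case. For small $i$ (say $i \le k$, where no triple can have all three orders summing below $i$ unless they are themselves low-order — more precisely I would check the smallest $i$ for which triples exist), I would invoke \cref{coversize} together with the argument of \cref{coversizecor}: a cover $W$ of $\tau$ of size at most $k$ already witnesses every triple all of whose members have order $0$, and more generally handles the initial block of the induction, contributing the additive ``$+k$'' in the bound. This is why the theorem is stated with the extra summand $k$ rather than a clean power of $3$.

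For the induction step, suppose $W_{i-1}$ has been constructed and we must witness the new triples $\{A_1,A_2,A_3\}$ with $f(A_1)+f(A_2)+f(A_3) = i-1$ that are not already witnessed. The key idea — improving on Grohe and Schweitzer — is to show that a single well-chosen point $v$ can be made to witness a whole family of such triples simultaneously, namely all triples obtained by keeping one or two ``large'' separations fixed and varying the remaining members over separations forced into $\tau$ by the corner/submodularity relations. Concretely, given an unwitnessed triple, one uses submodularity to replace a member $A_j$ by a corner $A_j \cap A_\ell$ of strictly smaller order, pushing the order sum down until the triple is already witnessed by $W_{i-1}$; the witnessing point $v$ of the reduced triple, together with the consistency condition, then witnesses the original one after adding at most $3^{|W_{i-1}|}$-many points indexed by the possible ``profiles'' the old points of $W_{i-1}$ induce on the three separations. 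Summing the recursion $|W_i| \le |W_{i-1}| + 3^{|W_{i-1}|}$ over the relevant range of $i$, starting from $|W_k| \le k$, and bounding it geometrically, yields $|W_{3k-2}| \le \tfrac{1}{2}(3^{3k-2} - 3^k) + k$.

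The main obstacle will be the induction step: proving precisely that one new point suffices per ``profile class'' of triples, i.e.\ that after fixing how the points of $W_{i-1}$ split across the three relevant oriented separations, all the still-unwitnessed triples in that class share a common witness. This requires carefully exploiting the corner structure guaranteed by \eqref{submodcond} to reduce an arbitrary high-order triple to a low-order one witnessed by $W_{i-1}$, and then using \eqref{conscond} to transfer the witness back up. Getting the bookkeeping of the exponents right — ensuring the number of profile classes is at most $3^{|W_{i-1}|}$ rather than something larger — is the delicate point, and it is exactly where the gain over the tower-type bound $\theta(3k-2)$ is realized. I would expect the routine parts (the geometric summation, the base case via \cref{coversize}) to be short once the step is in place.
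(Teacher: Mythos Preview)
Your plan has a genuine gap, and it sits exactly where you flag the ``main obstacle''. You propose the recursion $|W_i| \le |W_{i-1}| + 3^{|W_{i-1}|}$ and then assert that summing it geometrically gives $\tfrac{1}{2}(3^{3k-2}-3^k)+k$. But that recursion is precisely the Grohe--Schweitzer recursion defining $\theta$, and it does \emph{not} sum geometrically: starting from $|W_k|\le k$ you get $|W_{k+1}|\le k+3^k$ and then $|W_{k+2}|\le k+3^k+3^{k+3^k}$, already a tower. Bounding the number of profile classes by $3^{|W_{i-1}|}$ (all tripartitions of the current witnessing set) is therefore not an improvement at all.

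The paper's actual idea is to avoid considering all tripartitions of $W_{i-1}$. Instead one maintains, alongside the sets $W_i$, an explicit family $\mathcal X_i$ of ``active'' tripartitions with $|\mathcal X_i|\le 3^{i-1}$. For each $X=\{X_1,X_2,X_3\}\in\mathcal X_{i-1}$ one chooses, via a submodularity/uniqueness argument, oriented separations $B_{X,j}\in\tau$ of minimum order disjoint from $X_j$, and picks a single point $v_X\in B_{X,1}\cap B_{X,2}\cap B_{X,3}$; this $v_X$ witnesses every unwitnessed triple compatible with $X$ of minimal total order. Then $\mathcal X_i$ is obtained from $\mathcal X_{i-1}$ by inserting, for each $X$, the one new point $v_X$ into one of the three blocks of $X$---so $|\mathcal X_i|\le 3|\mathcal X_{i-1}|$. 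The correct recursion is thus $|W_{i+1}|\le |W_i|+|\mathcal X_i|\le |W_i|+3^{i-1}$, which \emph{does} sum to $k+\tfrac12(3^{3k-2}-3^k)$. Your corner-replacement sketch does not produce this controlled tripling of profile classes, and without it the argument collapses back to the tower bound.
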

\begin{proof}
We construct sets $W_k \subset W_{k+1} \dots \subset V$ inductively such that for $i \ge k$ all $\{A_1,A_2,A_3\} \subset \tau$ with $f(A_1) + f(A_2) + f(A_3) < i$ satisfy $W_i \cap (A_1 \cap A_2 \cap A_3) \ne \emptyset$.
Then $W := W_{3k-2}$ will be a witnessing set of $\tau$.

First, we choose $W_k$ as a minimal cover of $\tau$ whose size, by \cref{coversize}, is $|W_k| \le k$. Then $W_k$ is as desired:
Let $\{A_1, A_2, A_3\} \subset \tau$ with $f(A_1) + f(A_2) + f(A_3) < k$.
By submodularity, we know $f(A_1 \cap A_2 \cap A_3) \le f(A_1) + f(A_2) + f(A_3) < k$.
Thus, $A_1 \cap A_2 \cap A_3 \in \tau$ since $\tau$ is a $k$-tangle.
But then $W_k \cap (A_1 \cap A_2 \cap A_3) \ne \emptyset$ by the definition of a cover.

For the step from $W_k$ to $W_{k+1}$ (and only for this one), we copy the induction step from Grohe and Schweitzer's proof.
Therefore let $\mathcal X_k$ be the set of all partitions of $W_k$ into three (possibly empty) pairwise disjoint sets $\{X_1,X_2,X_3\}$. So $|\mathcal X_k| \le 3^{k-1}$.
Then for every oriented separation triple $\{A_1, A_2, A_3\} \subset \tau$ that is not witnessed by $W_k$, we can define
\begin{equation}
\label{deftripleX}
X_1 := W_k \setminus A_1, \quad X_2 := W_k \setminus (A_2 \cup X_1), \quad X_3 := W_k \setminus (A_3 \cup X_1 \cup X_2).
\end{equation}
These three sets are pairwise disjoint and since $\{A_1, A_2, A_3\}$ is not witnessed by $W_k$,
\begin{equation*}
X_1 \cup X_2 \cup X_3 = (W_k \setminus A_1) \cup (W_k \setminus A_2) \cup (W_k \setminus A_3) = W_k \setminus (A_1 \cap A_2 \cap A_3) = W_k.
\end{equation*}
Hence, $X := \{X_1,X_2,X_3\} \in \mathcal X_k$ and
\begin{equation}
\label{septripleX}
A_j \cap X_j = \emptyset \text{ for } j = 1,2,3.
\end{equation}

Next, we can show that for every $X \in \mathcal X_k$ and for $j = 1,2,3$ there is either no oriented separation $B_{X,j} \in \tau$ with $B_{X,j} \cap X_j = \emptyset$ or there is a unique $B_{X,j} \in \tau$ such that
\begin{enumerate}[label = (\roman*)]
\item $B_{X,j} \cap X_j =\emptyset$;
\item subject to (i), the order $f(B_{X,j})$ is minimal;
\item subject to (i) and (ii), $B_{X,j}$ is minimal (with respect to the subset relation~``$\subset$'') 
\end{enumerate}
This can be proven by a standard submodularity argument:
We consider the case where there is an oriented separation $B \in \tau$ with $B \cap X_j = \emptyset$.
Then the set of all oriented separations satisfying (i) and (ii) is not empty.
Suppose that this set has two different elements $B_1, B_2 \in \tau$ that are both of minimum order $l := f(B_1) = f(B_2)$ and, subject to having order $l$, are both minimal. Since  $B_1$ and $B_2$ are minimal, $|B_1 \cap B_2| > l$.
But then, by submodularity, $|B_1 \cup B_2| \le |B_1| + |B_2| -|B_1 \cap B_2| < l$ -- a contradiction to the minimal order of $B_1$ and $B_2$.
So there is a unique oriented separation $B_{X,j}\in \tau$ satisfying (i) - (iii).

Now we are ready to construct $W_{k+1}$ by adding at most $|\mathcal X_k|$ points to $W_k$:
If for some $X \in \mathcal X_k$, no $B_{X,j}$ exists for any $j \in \{1,2,3\}$, then there is no yet unwitnessed triple $\{A_1, A_2, A_3\} \subset \tau$ satisfying \eqref{septripleX} and nothing has to be done. Otherwise, by consistency condition \eqref{conscond} for a tangle, we can pick a point $v_X \in B_{X,1} \cap B_{X,2} \cap B_{X,3}$. This point $v_X$, by the definition of $B_{X,j}$, witnesses all triples $\{A_1, A_2, A_3\} \subset \tau$ satisfiying \eqref{septripleX} for which $f(A_1) + f(A_2) + f(A_3)$ is minimal. As every triple $\{A_1, A_2, A_3\} \subset \tau$ satisfying \eqref{septripleX} cannot be witnessed by $W_k$ and, thus, has $f(A_1) + f(A_2) + f(A_3) \ge k$, all $\{A_1, A_2, A_3\} \subset \tau$ with $f(A_1) + f(A_2) + f(A_3) \le k$ are witnessed by either $W_k$ or
\begin{equation}
\label{Nk}
N_k := \{v_X : X \in \mathcal X_k\}
\end{equation}
So we can define
\begin{equation}
\label{Wk}
W_{k+1}:= W_k \cup N_k.
\end{equation}
Then $W_{k+1}$ is as desired and $|W_{k+1}| = |W_k| + |N_k| \le |W_k| + |\mathcal X_k| \le k + 3^{k-1}$.

For the induction step, we suppose that $\mathcal X_{i-1}$, $N_{i-1}$, $W_i$ and a function $\mathcal X_{i-1} \rightarrow N_{i-1}$, $X \mapsto v_x$ have been defined for some $i \ge k+1$ such that
\begin{enumerate}[label = (\roman*)]
\item for all triples $\{A_1, A_2, A_3\} \subset \tau$ that are not witnessed by $W_{i-1}$ there is $\{X_1,X_2,X_3\} \in \mathcal X_{i-1}$ with \eqref{septripleX};
\item $v_X \in A_1 \cap A_2 \cap A_3$ for all triples $\{A_1, A_2, A_3\} \subset \tau$ satisfying~\eqref{septripleX} for $X = \{X_1,X_2,X_3\} \in  \mathcal X_{i-1}$ for which $f(A_1) + f(A_2) + f(A_3)$ is minimal. And $|\mathcal X_{i-1}| \le 3^{i-2}$;
\item $f(A_1) + f(A_2) + f(A_3) \ge i-1$ for all triples $\{A_1, A_2, A_3\} \subset \tau$ satisfying \eqref{septripleX} for some $X = \{X_1,X_2,X_3\} \in  \mathcal X_{i-1}$;
\item $W_i \cap (A_1 \cap A_2 \cap A_3) \ne \emptyset$ for all $\{A_1, A_2, A_3\} \subset \tau$ with $f(A_1) + f(A_2) + f(A_3) < i$. And $|W_i| \le k + \sum_{j=k-1}^{i-2} 3^j$.
\end{enumerate}
So we want to define $\mathcal X_i$, $N_i$, $W_{i+1}$ and a function $\mathcal X_i \rightarrow N_i$, $Y \mapsto v_Y$ satisfying the properties (i) - (iv) for $i+1$ instead of $i$.

For every $X = \{X_1,X_2,X_3\} \in \mathcal X_{i-1}$ let
\begin{equation}
\label{defcalX}
\mathcal X_{i,X} := \left\{\{X_1 \cup \{v_X\},X_2,X_3\}, \{X_1,X_2 \cup \{v_X\},X_3\}, \{X_1,X_2,X_3 \cup \{v_X\}\}\right\}.
\end{equation}
Then we can define
\begin{equation}
\label{N0}
\mathcal X_i := \bigcup_{X = \{X_1,X_2,X_3\} \in \mathcal X_{i-1}} \mathcal X_{i,X}.
\end{equation}
Hence, by induction hypothesis (ii), $|\mathcal X_i| \le 3|\mathcal X_{i-1}| \le 3^{i-1}$.

Next, let $\{A_1, A_2, A_3\} \subset \tau$ be an oriented separation triple that is not witnessed by $W_i$.
Then $\{A_1, A_2, A_3\} \subset \tau$ cannot be witnessed by $W_{i-1}$ either.
So according to induction hypothesis (i), there is $X = \{X_1,X_2,X_3\} \in \mathcal X_{i-1}$ such that \eqref{septripleX} holds.
As $\{A_1, A_2, A_3\}$ is not witnessed by $v_X \in W_{i}$, \eqref{defcalX} implies
\begin{equation}
\label{septripleXi}
A_j \cap Y_j = \emptyset \text{ for } j = 1,2,3
\end{equation}
for at least one $Y = \{Y_1,Y_2,Y_3\} \in \mathcal X_{i,X} \subset \mathcal X_i$. This shows (i).

For $Y \in \mathcal X_i$, we define $B_{Y,j}$, $j = 1,2,3$ (if existent) and $v_Y$ exactly as $B_{X,j}$ for $X \in \mathcal X_k$ above. Again $v_Y$ witnesses all triples $\{A_1, A_2, A_3\} \subset \tau$ satisfiying \eqref{septripleXi} where $f(A_1) + f(A_2) + f(A_3)$ is minimal. This shows (ii).

In order to show (iii), fix any triple $\{A_1, A_2, A_3\} \subset \tau$ satisfying \eqref{septripleXi} for some $Y \in \mathcal X_i$ and suppose for a contradiction that $f(A_1) + f(A_2) + f(A_3) < i$.  Without loss of generality $Y = \{X_1 \cup \{v_X\},X_2,X_3\}$ for some $X = \{X_1,X_2,X_3\} \in \mathcal X_{i-1}$. Thus, \eqref{septripleXi} implies \eqref{septripleX}. So by induction hypothesis (iii), we obtain that $f(A_1) + f(A_2) + f(A_3) = i-1$ is minimal among all triples satisfying \eqref{septripleX}. But then, by induction hypothesis (i), $v_X \in A_1 \cap A_2 \cap A_3$ which is a contradiction to \eqref{septripleXi} since $Y = \{X_1 \cup \{v_X\},X_2,X_3\}$. So (iii) holds.

Hence, every triple $\{A_1, A_2, A_3\} \subset \tau$ with $f(A_1) + f(A_2) + f(A_3) \le i$ is witnessed by either $W_{i}$ or
\begin{equation}
\label{Ni}
N_i := \{v_Y : Y \in \mathcal X_{i}\}.
\end{equation}
So we can define
\begin{equation}
\label{Wi}
W_{i+1}:= W_i \cup N_i.
\end{equation}
Then $W_{i+1}$ is as desired and by induction hypothesis (iv), we get
\begin{equation}
|W_{i+1}| \le |W_i| + |\mathcal X_i| \le k + \sum_{j=k}^{i-1} 3^j = \tfrac 1 2 (3^i-3^k) + k.
\end{equation}
which finally shows (iv) and finishes the proof.
\end{proof}

\subsection{Second upper bound}
\label{uppb2}

In this section, we prove a slightly better exponential upper bound for the size of a minimal witnessing set than in the previous section.
First, we will show a structural result that characterizes the order of a tangle.
From this, we will be able to conclude the upper bound.

First, we need another definition.
Fix $k \ge 1$. Then, for a tangle $\tau$, we call a series of minimal oriented separations $A_1,\dots, A_k \in \tau$ an \emph{intersection chain of length $k$ in $\tau$} if
\begin{equation}
\label{intchcond}
A_1^c \supsetneq A_1^c \cap A_2^c \supsetneq \dots
\supsetneq \bigcap_{i=1}^k A_i^c.
\end{equation}
Further, we refer to $I \subset V$ as a \emph{$k$-intersection} if $A_1,\dots,A_k$ is an intersection chain of maximum length such that $I = \bigcap_{i=1}^k A_i^c$.

\begin{prop}
\label{intchlord}
Let $S$ be a set of separations of any set $V$ and let $\tau$ be a $k$-tangle of $S$ regarding to some submodular order function $f$. Then the maximum length of an intersection chain in $\tau$ is $k$.
\end{prop}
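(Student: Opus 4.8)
The statement claims the maximum length of an intersection chain in a $k$-tangle $\tau$ is exactly $k$. There are two inequalities to establish. For the lower bound, I would exhibit an intersection chain of length $k$: since $\tau$ is a $k$-tangle, there should be enough separations of order below $k$ whose small sides $A_i^c$ nest strictly, and I would construct such a chain greedily, picking at each step a minimal oriented separation in $\tau$ whose small side properly enlarges the running intersection; a cover-size argument (via \cref{coversize}) should guarantee that this cannot terminate before reaching length $k$. For the upper bound, I would argue that an intersection chain of length $k+1$ forces a separation of order $\ge k$ to lie in $\tau$, contradicting that $\tau$ orients only $S_k$.

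\textbf{Key steps.} The heart of the matter is the upper bound. Suppose for contradiction $A_1,\dots,A_{k+1}$ is an intersection chain of length $k+1$ with each $A_i$ minimal in $\tau$. The strict inclusions in \eqref{intchcond} mean that $\bigl|\bigcap_{i=1}^j A_i^c\bigr|$ strictly decreases in $j$, but more usefully I would track the orders: using submodularity \eqref{submodcond} on the corners of $A_1^c\cap\dots\cap A_{j-1}^c$ (which is in $\tau$'s "complement side" as an oriented separation) and $A_j^c$, I would show that the order of the running intersection $\bigcap_{i=1}^j A_i^c$ can be controlled, and dually the order of $\bigcap_{i=1}^j A_i$ is forced upward. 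Concretely, minimality of each $A_i$ subject to lying in $\tau$ means that no proper subset of $A_i$ is in $\tau$; combined with the strict nesting of the $A_i^c$ this should let me inductively bound $f\bigl(\bigcap_{i=1}^j A_i\bigr)$ or establish that $\bigcap_{i=1}^j A_i \in \tau$ is itself a small side forced by earlier separations, eventually producing a separation in $\tau$ of order $\ge k$. For the lower bound direction, start with any cover $U$ of $\tau$ of size $\le k$ (\cref{coversize}); if there were no intersection chain of length $k$, then the "descent" of nested small sides stabilizes early, which I would translate into $\bigcap\tau$ or a short intersection being nonempty, i.e.\ $\tau$ being witnessed by fewer than $k$ points in a way that contradicts sharpness — more carefully, I expect the right argument is that each step of the chain removes at least one cover point, so a maximal chain has length at least $|U_{\min}|$ where $U_{\min}$ is a minimum cover, and independently one shows a minimum cover has size exactly matching the chain length.

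\textbf{Main obstacle.} The delicate point is making precise the interaction between \emph{minimality} of the $A_i$ (with respect to $\subset$), the \emph{strictness} of the chain \eqref{intchcond}, and the \emph{order} bound $f(A_i)<k$. The naive hope that $f\bigl(\bigcap_{i\le j}A_i^c\bigr)$ or $f(A_i^c)$ decreases by $1$ at each step is not literally true; instead I expect one needs a clever submodular exchange: at each step either the order of the running corner $\bigcap_{i\le j}A_i$ goes up by at least one, or minimality of $A_{j+1}$ is violated by replacing it with the union corner. Tracking this invariant across $k+1$ steps to reach a contradiction with $f<k$ is where the real work lies, and I would expect to need the fact (proved by the same submodular uniqueness argument used for $B_{X,j}$ in \cref{uppb1}) that a minimal separation of minimal order in $\tau$ through a fixed constraint is unique, to pin down the chain and force the order to strictly increase.
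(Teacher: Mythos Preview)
Your proposal has a genuine gap in the upper-bound direction, and misdirected effort on the lower bound.

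First, the paper proves only that no intersection chain of length $k{+}1$ exists; it neither proves nor uses the existence of a chain of length exactly $k$. (Indeed, for an arbitrary $k$-tangle the lower bound need not hold: a $k$-tangle can have very few minimal oriented separations.) So your cover-based plan for the lower bound is chasing a statement that is not part of the actual content of the proposition as used.

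For the upper bound, you are tracking the wrong quantity. You propose to follow the order of $\bigcap_{i\le j} A_i$ (the intersection of the \emph{big} sides) and hope it increases; this does not interact cleanly with the chain condition \eqref{intchcond}, which concerns the small sides $A_i^c$. The paper instead tracks $f\bigl(\bigcup_{j=1}^{i} A_j\bigr)$, the order of the \emph{union} of big sides (equivalently, of $\bigcap_{j\le i} A_j^c$), and shows by induction that $f\bigl(\bigcup_{j\le i} A_j\bigr)\le k-i$. For $i=k{+}1$ this is negative, contradicting $f\ge 0$. The inductive step is exactly the submodular exchange you anticipated, but applied to the right pair: strictness of the chain gives $A_{i+1}\cap\bigcup_{j\le i}A_j \subsetneq A_{i+1}$, so minimality of $A_{i+1}$ in $\tau$ forces $f\bigl(A_{i+1}\cap\bigcup_{j\le i}A_j\bigr)\ge k$ (otherwise this proper subset would lie in $\tau$); then \eqref{submodcond} with $f(A_{i+1})\le k{-}1$ and the inductive bound yields $f\bigl(\bigcup_{j\le i+1}A_j\bigr)\le (k{-}i)+(k{-}1)-k=k-(i{+}1)$.

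No uniqueness argument of the $B_{X,j}$ type is needed anywhere; the proof is a straight induction on $i$ using only minimality of the $A_i$, the strict chain condition, and one application of submodularity per step.
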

\begin{proof}
Suppose for a contradiction, $\tau$ has an intersection chain $A_1, \dots, A_{k+1}$ of length $k+1$.
We will show by induction on $i = 1,...,k+1$ that $f\left(\bigcup_{j=1}^i A_j\right) \le k - i$.
For $i = k+1$, this will be the desired contradiction as order functions are defined to be non-negative.

Since $\tau$ is a $k$-tangle, we have $f(A_1) \le k-1$. So the induction starts.

Now assume that the induction hypothesis holds for some $i \in \{1,\dots,k\}$, i.~e. $f\left(\bigcup_{j=1}^i A_j\right) \le k - i$.

First, we show that $f\left(A_{i+1} \cap \bigcup_{j=1}^i A_j\right) \ge k$:
The intersection chain condition \eqref{intchcond} implies
\begin{equation*}
A_{i+1} \setminus \bigcup_{j=1}^i A_j
=\bigcup_{j=1}^{i+1} A_j \setminus \bigcup_{j=1}^i A_j
= \bigcap_{j=1}^i A_j^c\setminus \bigcap_{j=1}^{i+1} A_{j}^c \ne \emptyset.
\end{equation*}
Hence
\begin{equation*}
A_{i+1} \cap \bigcup_{j=1}^i A_j = A_{i+1} \setminus \left(A_{i+1} \setminus \bigcup_{j=1}^i A_j\right) \subsetneq A_{i+1}.
\end{equation*}

So $f \left(A_{i+1} \cap \bigcup_{j=1}^i A_j\right) < k$ would imply $A_{i+1} \cap \bigcup_{j=1}^i A_j \in \tau$ and, thus, contradict the minimality of $A_{i+1}$. Therefore $f\left(A_{i+1} \cap \bigcup_{j=1}^i A_j\right) \ge k$.

Now we are ready to show $f\left(\bigcup_{j=1}^{i+1} A_j\right) \le k - (i+1)$:
The induction hypothesis gives us $f\left(\bigcup_{j=1}^i A_j\right) \le k - i$.
Further, we have just shown $f \left(A_{i+1} \cap \bigcup_{j=1}^i A_j\right) \ge k$.
And $f(A_{i+1}) \le k-1$ since $A_{i+1} \in \tau$.
So, by submodularity, we obtain
\begin{align*}
f\left(\bigcup_{j=1}^{i+1} A_j\right) &=f\left(A_{i+1} \cup \bigcup_{j=1}^i A_j\right) \\
&\le f\left(\bigcup_{j=1}^i A_j\right) + f(A_{i+1}) - f \left(A_{i+1} \cap \bigcup_{j=1}^i A_j \right) \\
&\le k-i + k-1 - k = k - (i+1).
\end{align*}

So the induction hypothesis is also true for $i+1$ which completes the proof.
\end{proof}
Now using \cref{intchlord}, we can prove another upper bound for the size of a minimal witnessing set.

\begin{theorem}
\label{intch2}
Let $S$ be a set of separations of any set $V$ and let $\tau$ be a $k$-tangle of $S$ regarding to some submodular order function $f$. Then $\tau$ has a witnessing set of size at most $\frac {3^k - 1} 2$.
\end{theorem}
\begin{proof}

Let $W$ be a minimal witnessing set of the $k$-tangle $\tau$.
We will show by induction on $l = 1, \dots, k-1$ that every $(k-l)$-intersection $I$ satisfies
\begin{equation}
{|I \cap W| \le \frac {3^l-1} 2}.
\end{equation}
This implies the assertion of the theorem: Note that $A^c$ is a $1$-intersection for every minimal oriented separation $A \in \tau$.
Therefore, for any triple of minimal oriented separations ${\{A_1, A_2, A_3\} \subset \tau}$
\begin{equation}
\label{minsepW}
{|(A_1^c \cup A_2^c \cup A_3^c) \cap W|
\le 3 \cdot \frac {3^{k-1}-1} 2 = \frac {3^k - 3} 2}.
\end{equation}
And since $W$ is supposed to be minimal, for any point $w \in W$, there has to be a triple of minimal oriented separations $\{A_1, A_2, A_3\} \in \tau$ such that ${(A_1 \cap A_2 \cap A_3) \cap W = \{w\}}$: Otherwise $W \setminus \{w\}$ would still be a witnessing set of $\tau$ contradicting the minimality of $W$.
Hence, by \eqref{minsepW},
\begin{equation*}
|W| = |((A_1^c \cup A_2^c \cup A_3^c) \cap W) \overset \cdot \cup (A_1 \cap A_2 \cap A_3 \cap W)| \le \frac {3^k - 1} 2
\end{equation*}
as desired.

To start the induction, we first observe that $I^k \cap W = \emptyset$ for every $k$-intersection $I^k$:
Suppose for a contradiction, there would exist $w \in I^k \cap W$.
Let $A_1, \dots, A_k$ be an intersection chain of length $k$ that makes $I^k$ a $k$-intersection.
Assume $w \in A_{k+1}$ for some $A_{k+1} \in \tau$.
Without loss of generality, we can assume that $A_{k+1}$ is minimal in $\tau$.
But then $A_1, \dots, A_{k+1}$ would be an intersection chain of length $k+1$ -- contradicting \cref{intchlord}.
Hence, $w \notin A$ for all minimal oriented separations $A \in \tau$.
However, then $W \setminus \{w\}$ would still be a witnessing set of $\tau$ -- a contradiction to the minimality of $W$.
Thus, $I^k \cap W = \emptyset$ for every $k$-intersection $I^k$.

This implies that every $(k-1)$-intersection $I^{k-1}$ satisfies $|I^{k-1} \cap W| \le 1$:
Again, suppose for a contradiction, there would be $v,w \in I^{k-1} \cap W$ with $v \ne w$.
Suppose further that there is some $A_k \in \tau$ with $v \in A_k$ and $w \in A_k^c$ (or vice versa).
Once again, we can assume without loss of generality that $A_k$ is minimal in $\tau$.
Let $A_1, \dots, A_{k-1}$ be an intersection chain of length $k-1$ that makes $I^{k-1}$ a $(k-1)$-intersection.
Then $A_1, \dots, A_k$ is an intersection chain of length $k$ and $\bigcap_{i=1}^k A_i$ is a non-empty $k$-intersection since it contains either $v$ or $w$ -- a contradiction.
So there cannot be any $A_k \in \tau$ with $v \in A_k$ and $w \in A_k^c$ (or vice versa).
But then either $v$ or $w$ could be removed from $W$ and it would still be witnessing -- contradicting the minimality of $W$.
Therefore, $|I^{k-1} \cap W| \le 1 = \frac {3^1-1} 2$ for every $(k-1)$-intersection $I^{k-1}$. So the induction starts.

For the induction step, suppose the induction hypothesis has already been shown for some $l \in \{1, \dots, k-2\}$.
Let $I$ be a $(k-(l+1))$-intersection.
And let $w \in I \cap W$.
As $W$ is a minimal witnessing set, we find a triple $\{A_1, A_2, A_3\} \subset \tau$ such that $A_1 \cap A_2 \cap A_3 \cap W = \{w\}$ because otherwise $W \setminus \{w\}$ would still be a witnessing set of $\tau$.
Hence
\begin{equation*}
I \cap W = \{w\} \cup \bigcup_{i=1}^3 (I \cap A_i^c \cap W).
\end{equation*}
And since $I \cap A_i$ for $i = 1,2,3$ are $(k-l)$-intersections, we have $|I \cap W| \le 3 \cdot \frac{3^l-1} 2 + 1 = \frac {3^{l+1} - 1} 2$ which completes the proof.

\end{proof}

\begin{remark}
\label{intchgen}
\cref{intch2} can be applied to graph tangles even though graph tangles are witnessed by subgraphs instead of point sets. 
The best upper bound for the size of a minimal witnessing subgraph of a graph tangle of order $k$ is of size $O(3^{k^{k^5}})$ \cite{albrechtsen2024}.
We can improve this bound by using that every graph tangle induces an edge tangle which is a tangle of bipartitions. 
Thus, according to \cref{intch2}, every edge tangle of order $k$ has a witnessing set of size at most $\frac {3^k - 1} 2$.
Hence, every graph tangle of order $k$ is witnessed by a subgraph containing all the edges that witness its induced edge tangle.
This witnessing subgraph is of size at most $3^k-1$.
\end{remark}

\subsection{Lower bounds}
\label{lowb}
Suppose we are given two functions $\alpha, \beta: \mathbb N \rightarrow \mathbb N$ such that, for every $k \ge 1$, $\alpha(k)$ and $\beta(k)$ are upper bounds for the size of a minimal witnessing set, respectively, a cover of a $k$-tangle $\tau$ of $S$.
In the previous two sections, we have found such functions.
In this section, we provide two examples that bound those functions $\alpha$ and $\beta$ from below.
First, we show that the upper bound for the size of a minimal cover from Grohe and Schweitzer's \cref{coversize} \cite{grohe2015} is sharp.

\begin{example}
\label{examplecover}
For every $k \ge 1$, we can construct a $k$-tangle $\tau$ such that every cover of $\tau$ is of size at least $k$:
Fix $k \ge 1$.
Let $V$ be any set with $|V| \ge 3k-2$ and let $S$ be the set of all separations of $V$.
Consider the order function $f$ given by $f(\{A,A^c\}) = \min\{|A|,|A^c|\}$ for separations $\{A,A^c\} \in S$ which is submodular:
Let $A,B \subset V$.
Then
\begin{equation*}
f(A \cap B) + f(A \cup B) = \min\{|A \cap B|,|A^c \cup B^c|\} + \min\{|A \cup B|,|A^c \cap B^c|\}.
\end{equation*}
First, we consider the case $|A \cup B| \le |A^c \cap B^c|$. This implies $|A| \le |A^c|$ and $|B| \le |B^c|$. So we can conclude
\begin{align*}
f(A \cap B) + f(A \cup B) \le |A \cap B| + |A \cup B| = |A| + |B| = f(A) + f(B).
\end{align*}
For the case $|A^c \cup B^c| \le |A \cap B|$, we can argue analogously.
So the only remaining case is $|A \cap B| \le |A^c \cup B^c|$ and $|A^c \cap B^c| \le |A \cup B|$.
Here, we have
\begin{equation}
\label{minsubmod1}
f(A \cap B) + f(A \cup B) \le |A \cap B| + |A^c \cap B^c| \le |A \cap B| + |A \cup B| = |A| + |B|
\end{equation}
and analogously
\begin{equation}
\label{minsubmod2}
f(A \cap B) + f(A \cup B) \le |A \cap B| + |A^c \cap B^c| \le |A^c \cup B^c| + |A^c \cap B^c| = |A^c| + |B^c|.
\end{equation}
And further, we have
\begin{equation}
\label{minsubmod3}
f(A \cap B) + f(A \cup B) \le |A \cap B| + |A^c \cap B^c| \le |A| + |B^c|
\end{equation}
and
\begin{equation}
\label{minsubmod4}
f(A \cap B) + f(A \cup B) \le |A \cap B| + |A^c \cap B^c| \le |A^c| + |B|.
\end{equation}
Together, the equations \eqref{minsubmod1}, \eqref{minsubmod2}, \eqref{minsubmod3} and \eqref{minsubmod4} imply
\begin{align*}
f(A \cap B) + f(A \cup B) \le \min\{|A|,|A^c|\} + \min\{|B|,|B^c|\} = f(A) + f(B).
\end{align*}
This finally shows the submodularity of $f$.
So $\tau = \{A \subset V: |A^c| < k\}$ is a $k$-tangle of $S$.
But $\tau$ cannot be covered by a set $U \subset V$ with $|U| < k$.
\end{example}

The next example yields a lower bound for a function $\alpha$ in $k$ bounding the size of a minimal witnessing set of a $k$-tangle from above.

\begin{example}
\label{hypwbsp}
We consider the example that Diestel, Elbracht and Jacobs \cite{diestel2021} brought up to show that there are tangles without a guiding set:
Fix $k \ge 4$.
Let $V = \{T \subset \{1, \dots, k\}: |T| = 3\}$.
Then $|V| = \binom k 3.$
For $j \in \{1, \dots, k\}$, let $A_j := \{T \in V: j \in T\}$.
Then $\tau = \{A_j: j \in \{1,\dots,k\}\}$ is a tangle of $S = \{\{A_j,A_j^c\}: j \in \{1,\dots,k\}\}$:
Let $\{j_1,j_2,j_3\} \subset \{1,\dots,k\}$ with $|\{j_1,j_2,j_3\}|=3$.
Then ${\{j_1,j_2,j_3\}} \in A_{j_1} \cap A_{j_2} \cap A_{j_3}$ and, hence, the consistency condition \eqref{conscond} is satisfied.

As every tangle, $\tau$ is witnessed by its groundset $V$.
But let $W \subsetneq V$ be any proper subset of $V$.
Then we can pick $T = {\{j_1,j_2,j_3\}} \in V \setminus W$.
And since $A_{j_1} \cap A_{j_2} \cap A_{j_3} = \{T\}$, $W$ is not witnessing $\tau$.
Thus, the smallest witnessing set of $\tau$ has the size $|V| = \binom k 3$.

The following proposition shows that we can extend $\tau$ to a $k$-tangle $\tau^*$ on the set $S^*$ of all separations of $V$ regarding to some order function $f^*$ on $S^*$ such that $W$ still is a minimal witnessing set of $\tau^*$.

\end{example}

\begin{lemma}
\label{nonfakeprop}
Let $S$ be a set of separations of any set $V$ and let $\tau$ be a tangle of $S$ with $k$ minimal oriented separations.
Then there is a submodular order function $f^*$ on the set of all separations $S^*$ of $V$ such that $\tau$ extends to a $k$-tangle $\tau^*$ of $S^*$ with respect to $f^*$ such that the minimal separations of $\tau^*$ are the minimal separations of $\tau$.
\end{lemma}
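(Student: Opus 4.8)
The plan is to build the order function $f^*$ by first assigning orders to the separations we care about and then filling in the rest in a way that is guaranteed to be submodular. Let $A_1,\dots,A_k\in\tau$ be the minimal oriented separations of $\tau$. The natural guess, mirroring \cref{examplecover}, is to try $f^*(\{B,B^c\}):=\min\{|B|,|B^c|\}$ on all of $S^*$; this is submodular by the computation already carried out in \cref{examplecover}. The issue is that this particular $f^*$ need not make $\tau$ a $k$-tangle with the right minimal separations — the sizes $|A_i^c|$ are not under our control, and small separations of $V$ that do not come from $\tau$ could interfere. So the first thing I would do is reduce to a setting where a cardinality-based order does the job, by passing to a suitable blow-up or padding of the groundset.

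The key step is therefore a \emph{padding construction}. For each minimal oriented separation $A_i$ of $\tau$, the set $A_i^c$ is some nonempty subset of $V$; I would replace $V$ by a larger set $V'$ obtained by adding, for a large parameter $N$, many ``clones'' so that after the blow-up every $A_i^c$ (suitably re-interpreted in $V'$) has size exactly $k-1$ while every set that is \emph{not} one of the $A_i^c$ and not one of the other small sides $B^c$ of separations one wishes to keep small has size $\ge k$. Concretely, one can add $N$ fresh points to $V$ that lie on the \emph{big} side of every separation of $\tau$, i.e.\ inside $\bigcap_i A_i$; this keeps the consistency/witnessing structure of $\tau$ unchanged (the new points are never in any $A_i^c$, so $W$ remains a minimal witnessing set), and it inflates $|A_i|$ without touching $|A_i^c|$. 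One still has to arrange $|A_i^c|=k-1$; here one can additionally clone the points of each $A_i^c$ up to a common size, choosing the cloning so that the resulting bipartitions remain consistent — since a point and its clone get put on the same sides of every separation, the consistency condition is preserved verbatim. After this, define $f^*$ on $S^*(V')$ by $f^*(\{B,B^c\}):=\min\{|B|,|B^c|\}$, which is submodular by \cref{examplecover}, and check $S^*(V')_k$-membership: each $A_i$ now has $f^*(A_i)=k-1<k$, so $\tau$ orients every separation of order $<k$ that it contains, and since $\tau$ is already a full tangle of its own $S$, extending it to all of $S^*(V')_k$ by the majority/clone rule gives a $k$-tangle $\tau^*$.

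The remaining step is to verify that \emph{the minimal oriented separations of $\tau^*$ are exactly $A_1,\dots,A_k$} (pulled back along the cloning). That the $A_i$ are minimal in $\tau^*$ follows because any $B\in\tau^*$ with $B\subsetneq A_i$ would, after un-cloning, give an element of $\tau$ strictly below $A_i$, contradicting minimality in $\tau$; and any $B\in\tau^*$ with $B\subseteq A_i$ of the same size equals $A_i$. That no \emph{new} minimal separation appears is the point where I expect the real work: one must show that every $B\in\tau^*$ dominates some $A_i$, i.e.\ contains $\bigcap_j A_j$ up to the cloned structure, which is exactly the statement that $\tau^*$ is generated (as an up-set inside $\overrightarrow{S^*}$) by the $A_i$. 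This I would get from the blow-up: after padding, any separation $\{B,B^c\}$ with $f^*<k$ has $\min\{|B|,|B^c|\}\le k-1$, and the $N$ padding points force the small side $B^c$ to avoid $\bigcap_i A_i$ entirely, so $B^c\subseteq\bigcup_i A_i^c$; combined with consistency of $\tau^*$, a short counting/intersection argument then pins $B$ above one of the $A_i$. The main obstacle is making this last ``no new minimal separation'' argument airtight while keeping the padding explicit enough that submodularity via \cref{examplecover} still literally applies; everything else is bookkeeping about clones preserving consistency and witnessing.
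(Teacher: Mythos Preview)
Your approach has two concrete gaps.

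\textbf{First, and most fundamentally:} the lemma asks for a submodular order function on $S^*$, the set of all separations of the \emph{given} groundset $V$, and for $\tau^*$ to be a $k$-tangle of that $S^*$. Your construction produces an order function on $S^*(V')$ for a blown-up groundset $V'$. The phrase ``pulled back along the cloning'' does not repair this: for an arbitrary separation $\{B,B^c\}$ of $V$ there is no canonical place to put the padding points (they were declared to lie on the big side of every separation of $\tau$, but for separations outside $S$ no big side has been defined), so there is no lattice embedding $S^*(V)\hookrightarrow S^*(V')$ along which to restrict your cardinality order. Without such a transfer you have proved a statement about $V'$, not the lemma.

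\textbf{Second:} the step ``arrange $|A_i^c|=k-1$ by cloning'' need not be possible. Here $k$ is the \emph{number} of minimal oriented separations of $\tau$; it bears no a priori relation to the cardinalities $|A_i^c|$. If some $|A_i^c|>k-1$ to begin with, cloning can only increase it, so you cannot reach $k-1$. You could clone up to some common value $K-1\ge\max_i|A_i^c|$, but then your cardinality order makes $\tau^*$ a $K$-tangle, not a $k$-tangle, and the statement is lost.

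By contrast, the paper works intrinsically on $V$: it sets
\[
f^*(s)\;:=\;k-\bigl|U_s\bigr|,
\]
where $|U_s|$ is the largest number of the minimal $A_i$ that fit inside one side of $s$. Submodularity follows from a short case analysis on whether $U_r$ and $U_s$ sit on the same or opposite sides, and the extension $\tau^*$ is just ``orient $s$ towards the side containing some $A_i$''. This makes the identification of the minimal separations of $\tau^*$ immediate (every $B\in\tau^*$ contains some $A_i$ by definition) and avoids any modification of the groundset.
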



\begin{proof}
Let $A_1, \dots, A_k \in \tau$ be the $k$ minimal oriented separations in $\tau$.
For every $v \in V$ let
\begin{equation}
T_v := \{j \in \{1, \dots, k\}: v \in A_j\}.
\end{equation}
Now we can endow the set $S^*$ of all separations of $V$ with an order function $f^*:S^* \rightarrow \mathbb N_0$ such that every separation $(A,A^c) \in S^*$ with $A_i \subset A$ or $A_i \subset A^c$ for at least one $i \in \{1, \dots, k\}$ has order $< k$.
(For these separations, the consistency condition \eqref{conscond} implies that $(A,A^c)$ will be oriented towards $A$ if $A_i \subset A$ for some $i \in \{1, \dots, k\}$ and towards $A^c$ otherwise.)
For every separation $s = \{A,A^c\} \in S^*$, let $U_s \subset \{1,\dots,k\}$ be a set of maximum cardinality such that $\{v \in V: T_v \cap U_s \ne \emptyset\} \subset A$ or $\{v \in V: T_v \cap U_s \ne \emptyset\} \subset A^c$.
Then $|U_s|$ counts the maximum number of minimal oriented separations $A_i$ of $\tau$ that are all contained in the same side of $s$.
We set
\begin{equation}
f^*(s) := k - |U_s|.
\end{equation}
To show that $f^*$ is submodular, let $r = \{A,A^c\}, s = \{B,B^c\} \in S^*$. We have to prove the submodularity condition $f^*(A \cap B)+f^*(A \cup B) \le f^*(A) + f^*(B)$.
Without loss of generality, we can assume that one of the following two cases applies:

Case 1: $\{v \in V: T_v \cap U_r \ne \emptyset\} \subset A$ and $\{v \in V: T_v \cap U_s \ne \emptyset\} \subset B$. Then
\begin{equation}
\{v \in V: T_v \cap (U_r \cap U_s) \ne \emptyset\} \subset A \cap B
\end{equation}
and
\begin{equation}
\{v \in V: T_v \cap (U_r \cup U_s) \ne \emptyset\}
= \{v \in V: (T_v \cap U_r) \cup (T_v \cap U_s) \ne \emptyset\}
\subset A \cup B.
\end{equation}
Since $U_{\{A \cap B, A^c \cup B^c\}}$ and $U_{\{A \cup B, A^c \cap B^c\}}$ are chosen of maximum cardinality this implies $|U_{\{A \cap B, A^c \cup B^c\}}| \ge |U_r \cap U_s|$ and $|U_{\{A \cup B, A^c \cap B^c\}}| \ge |U_r \cup U_s|$.
Hence
\begin{align*}
f^*(A \cap B)+f^*(A \cup B) &= k - |U_{\{A \cap B, A^c \cup B^c\}}| + k - |U_{\{A \cup B, A^c \cap B^c\}}| \\
&\le k - |U_r \cap U_s| + k - |U_r \cup U_s| \\
&= k - |U_r| + k - |U_s| \\
&= f^*(A) + f^*(B).
\end{align*}

Case 2: $\{v \in V: T_v \cap U_r \ne \emptyset\} \subset A$ and $\{v \in V: T_v \cap U_s \ne \emptyset\} \subset B^c$. Then
\begin{equation}
\{v \in V: T_v \cap U_s \ne \emptyset\} \subset B^c \subset A^c \cup B^c
\end{equation}
and
\begin{equation}
\{v \in V: T_v \cap U_r \ne \emptyset\} \subset A \subset A \cup B.
\end{equation}
Thus, $|U_{\{A \cap B, A^c \cup B^c\}}| \ge |U_s|$ and $|U_{\{A \cup B, A^c \cap B^c\}}| \ge |U_r|$.
Hence
\begin{align*}
f^*(A \cap B)+f^*(A \cup B) &= k - |U_{\{A \cap B, A^c \cup B^c\}}| + k - |U_{\{A \cup B, A^c \cap B^c\}}| \\
&\le k - |U_r| + k - |U_s| \\
&= f^*(A) + f^*(B).
\end{align*}
So $f^*$ is a submodular order function. Let
\begin{equation}
\tau^* = \{A \in \ora {S^*}: U_{\{A,A^c\}} \ne \emptyset, \{v \in V: T_v \cap U_{\{A,A^c\}} \ne \emptyset\} \subset A\}.
\end{equation}
To see that $\tau^*$ extends $\tau$, first, note that for no separation $s=\{A,A^c\} \in S^*$ we find two non-empty sets $U_1, U_2 \ne \emptyset$ such that $\{v \in V: T_v \cap U_1 \ne \emptyset\} \subset A$ and $\{v \in V: T_v \cap U_2 \ne \emptyset\} \subset A^c$:
Otherwise, there would be $i \in U_1$ and $j \in U_2$ which would imply $A_i \subset \{v \in V: T_v \cap U_1 \ne \emptyset\} \subset A$ and $A_j \subset \{v \in V: T_v \cap U_2 \ne \emptyset\} \subset A^c$ and, hence, $A_i \cap A_j = \emptyset$ -- a contradiction to the consistency condition \eqref{conscond} for the tangle $\tau$.
Now let $A \in \tau$.
Then we find $j \in \{1, \dots, k\}$ with $A_j \subset A$ (since $A_1,\dots,A_k$ are the maximal oriented separations of $\tau$).
But since $A_j = \{v \in V: T_v \cap \{j\} \ne \emptyset\}$ this implies $j \in U_{\{A,A^c\}} \ne \emptyset$ and $\{v \in V: T_v \cap U_{\{A,A^c\}} \ne \emptyset\} \subset A$ and, hence, $A \in \tau^*$.

Next, we show that $\tau^*$ is a tangle: Let $B_1, B_2, B_3 \in \tau^*$.
As $U_{\{B_j,B_j^c\}} \ne \emptyset$, for $j = 1,2,3$, we find $i_1,i_2,i_3 \in \{1, \dots, k\}$ with $i_j \in U_{\{B_j,B_j^c\}}$ and, thus, $A_{i_j} \subset B_j$ for $j = 1,2,3$.
But since $\tau$ is a tangle there is $v \in A_{i_1} \cap A_{i_2} \cap A_{i_3} \subset B_1 \cap B_2 \cap B_3$. So the consistency condition \eqref{conscond} is satisfied and $\tau^*$ is a tangle.

Finally, note that $\tau^*$ is a $k$-tangle with respect to $f^*$ because the condition $U_{\{A,A^c\}} \ne \emptyset$ is equivalent to $f^*(A) < k$.
This completes the proof.
\end{proof}

\section{Guiding functions and guiding sets of tangles}
\label{guidsets}

In this section, we address the question of when a tangle has a guiding set.
For this, we first demonstrate how we can determine the maximum reliability of a guiding function of a tangle.
Subsequently, we show that under certain circumstances, especially if the guiding function of a tangle has sufficiently high reliability, we can conclude the existence of a guiding set of this tangle.

\subsection{Guiding functions of tangles}
\label{guidfct}

First, we will prove a generalization of a duality theorem by Diestel, Elbracht and Jacobs \cite[Thm.~8]{diestel2021}.
It will guarantee that we either find a guiding function of a tangle with some given reliability or a function on the set of separations $S$ witnessing that there cannot be such a guiding function.
We will derive this theorem from the strong duality theorem of linear optimization (see e.g. ~\cite[ch.\ 6.2]{fang1993}).

\begin{theorem}[Strong duality theorem of linear optimization]
\label{strongdual}
Let $n,m \in \mathbb N$, $A \in M(n \times m, \mathbb R)$, $b \in \mathbb R^m$ and $c \in \mathbb R^n$.
Then the following holds:
The primal linear optimization problem
\begin{equation}
\label{primal}
c^* := \max_{x \in \mathbb R_+^n: Ax \le b} c^Tx
\end{equation}
has a finite optimal solution $c^*$ if and only if its dual
\begin{equation}
\label{dual}
b^* := \min_{y \in \mathbb R_+^m: A^T y \ge c} b^Ty
\end{equation}
has a finite optimal solution $b^*$. In either case $c^* = b^*$.
\end{theorem}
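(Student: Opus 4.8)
The plan is to deduce the theorem from Farkas' lemma and its affine version, so that the only nontrivial ingredient is the closedness of finitely generated convex cones. Write $P := \{x \in \mathbb R^n_+ : Ax \le b\}$ for the primal feasible region and $D := \{y \in \mathbb R^m_+ : A^Ty \ge c\}$ for the dual one. First I would record weak duality: for $x \in P$ and $y \in D$,
\[
c^Tx \;\le\; (A^Ty)^Tx \;=\; y^T(Ax) \;\le\; y^Tb \;=\; b^Ty,
\]
the first inequality using $x \ge 0$ and $A^Ty \ge c$, the second using $y \ge 0$ and $Ax \le b$. Hence if both $P$ and $D$ are non-empty, $c^T(\cdot)$ is bounded above on the polyhedron $P$ and $b^T(\cdot)$ is bounded below on the polyhedron $D$; a linear functional bounded on a non-empty polyhedron attains its bound (the fundamental theorem of linear programming, provable by passing to a vertex of a suitable face), so $c^*$ and $b^*$ both exist and $c^* \le b^*$.

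The core of the argument is the reverse inequality under the hypothesis that the \emph{primal} has a finite optimum $c^*$ (so $P \ne \emptyset$). Fix $\varepsilon > 0$. The system
\[
Ax \le b, \qquad -x \le 0, \qquad -c^Tx \le -(c^*+\varepsilon)
\]
is infeasible, since a solution would be an $x \in P$ with $c^Tx > c^*$. By the affine Farkas lemma (a system $Mz \le d$ is infeasible iff there is $w \ge 0$ with $M^Tw = 0$ and $d^Tw < 0$) there are multipliers $y \ge 0$, $u \ge 0$, $\lambda \ge 0$ with
\[
A^Ty - u - \lambda c = 0 \qquad\text{and}\qquad b^Ty - \lambda(c^*+\varepsilon) < 0 .
\]
If $\lambda = 0$ then $A^Ty = u \ge 0$, $y \ge 0$ and $b^Ty < 0$, which is exactly a Farkas certificate that $P = \emptyset$ — a contradiction; hence $\lambda > 0$. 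Rescaling by $\lambda$ and renaming $y/\lambda,\ u/\lambda$ back to $y,\ u$ gives $A^Ty = c + u \ge c$ with $y \ge 0$, so $y \in D$, together with $b^Ty < c^* + \varepsilon$; thus $b^* \le b^Ty < c^* + \varepsilon$. Letting $\varepsilon \downarrow 0$ gives $b^* \le c^*$, and with weak duality $c^* = b^*$, so in particular $D \ne \emptyset$ and $b^*$ is finite.

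Finally I would assemble the equivalence. One direction is the previous paragraph: a finite primal optimum forces a finite dual optimum with $b^* = c^*$. For the converse, I would note that the dual LP, after obvious sign changes, is itself an LP of exactly the primal form, so applying the already-proved implication to it shows that a finite dual optimum forces a finite primal optimum with $c^* = b^*$; attainment of the suprema/infima in both cases is again the fundamental theorem of linear programming. The main obstacle is thereby isolated in the affine Farkas lemma, which I would prove in the standard way: the relevant finitely generated cone $\{M^Tw : w \ge 0\}$ is closed and convex, so any vector outside it can be strictly separated from it by a hyperplane, and the separating functional is exactly the certificate $w$ — the closedness of this cone being the one point where a genuine (if routine) argument, e.g.\ via Carathéodory and compactness, is required.
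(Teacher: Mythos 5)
The paper does not prove this theorem at all: it states the strong duality theorem as a classical result and cites it from a textbook (\cite[ch.\ 6.2]{fang1993}). So there is no ``paper's own proof'' to compare against. What you have written is a self-contained proof of the cited result, which is a legitimate thing to do, but you should be aware that you proved something the authors deliberately took as a black box.

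As for correctness: your argument is the standard Farkas-based derivation of strong duality, and it is essentially right. Weak duality is stated correctly; the reduction of infeasibility of the system $Ax\le b,\ -x\le 0,\ -c^Tx\le -(c^*+\varepsilon)$ to the affine Farkas alternative is set up correctly, with the multiplier block $(y,u,\lambda)\ge 0$ giving $A^Ty-u-\lambda c=0$ and $b^Ty-\lambda(c^*+\varepsilon)<0$; the case $\lambda=0$ is correctly excluded by producing a Farkas certificate of primal infeasibility; and after rescaling you land in $D$ with $b^Ty<c^*+\varepsilon$, so $b^*\le c^*$ and weak duality closes the gap. Two small points worth tightening. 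First, ``a linear functional bounded on a non-empty polyhedron attains its bound'' needs the polyhedron to be pointed for the usual vertex argument; here both feasible sets are contained in the non-negative orthant, hence line-free, so vertices exist, but you should say so. Second, the converse direction by ``obvious sign changes'' is fine, but it is cleaner to note explicitly that $\min_{y\ge 0,\ A^Ty\ge c} b^Ty$ equals $-\max_{y\ge 0,\ (-A^T)y\le -c}(-b)^Ty$, which is literally of the primal form with data $(-A^T,-c,-b)$, whose dual is the original primal up to sign; that makes the symmetry airtight. With those two remarks your proof is complete, and it substitutes a real argument (built on Farkas, hence ultimately on closedness of finitely generated cones) for the paper's citation.
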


\begin{theorem}
\label{alpha_beta}
Let $S$ be a set of separations of any set $V$ and let $\tau$ be a $k$-tangle of $S$ regarding to some submodular order function $f$. Further, let $\rho \in [0,1]$.
Then there is precisely one of the following two functions: Either there is a function $h: S \rightarrow \mathbb R_+$ with $\sum_{s \in S} h(s) = 1$ such that for all $v \in V$
\begin{equation}
\label{bed_alpha}
\sum_{s \in S: \text{ } v \in \tau(s)} h(s) < \rho
\end{equation}
or there is a guiding function $g: V \rightarrow \mathbb R_+$ of $\tau$ with reliability $\rho$, i.~e. $\sum_{v \in V} g(v) = 1$ and for all $s \in S$
\begin{equation}
\label{bed_beta}
\sum_{v \in \tau(s)} g(v) \ge \rho.
\end{equation}
\end{theorem}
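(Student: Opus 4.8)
\emph{Strategy.} The plan is to reduce \cref{alpha_beta} to the strong duality theorem (\cref{strongdual}), reading $\rho$ as the optimal value of a fractional min--max linear program and of its dual. The object behind everything is the $0/1$ matrix indexed by $V\times S$ whose $(v,s)$-entry is $1$ exactly when $v\in\tau(s)$: condition \eqref{bed_beta} asks for a probability vector $g$ on $V$ all of whose $S$-coordinates under this matrix are $\ge\rho$, and condition \eqref{bed_alpha} asks for a probability vector $h$ on $S$ all of whose $V$-coordinates under the transpose are $<\rho$. So I expect the common optimal value of the primal and dual to be exactly the maximal reliability, and the dichotomy to fall out according to whether $\rho$ is at most or above this value.

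\emph{The linear program.} Assume $S\neq\emptyset$ (if $S=\emptyset$ then $\tau=\emptyset$, \eqref{bed_beta} holds vacuously and \eqref{bed_alpha} is impossible, so the statement is trivial). Consider the primal problem: maximize $t$ over variables $g(v)\ge 0$ for $v\in V$ and $t\ge 0$, subject to $t-\sum_{v\in\tau(s)}g(v)\le 0$ for every $s\in S$, together with $\sum_{v\in V}g(v)\le 1$ and $-\sum_{v\in V}g(v)\le -1$ (this pair of inequalities encodes $\sum_v g(v)=1$, and requiring $t\ge 0$ is harmless since $t=0$ is attainable). This is an instance of the primal problem \eqref{primal}. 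The point $g(v)=1/|V|$, $t=0$ is feasible and every feasible $t$ obeys $t\le\sum_{v\in\tau(s)}g(v)\le 1$, so the primal optimum $\rho^\ast$ is finite; let it be attained at $(g^\ast,\rho^\ast)$.

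\emph{Dualizing.} Attach dual variables $h(s)\ge 0$ to the constraints indexed by $s\in S$ and $\mu_1,\mu_2\ge 0$ to the two normalization inequalities. Reading off \eqref{dual}, the dual problem is: minimize $\mu_1-\mu_2$ subject to $\sum_{s\in S}h(s)\ge 1$ (the column of $t$) and $\mu_1-\mu_2\ge\sum_{s\in S:\,v\in\tau(s)}h(s)$ for every $v\in V$ (the columns of the $g(v)$). At an optimum one has $\mu_1-\mu_2=\max_{v\in V}\sum_{s:\,v\in\tau(s)}h(s)$, and replacing $h$ by $h/\sum_s h(s)$ can only decrease the objective, so the dual value equals the minimum of $\max_{v\in V}\sum_{s:\,v\in\tau(s)}h(s)$ over all $h\colon S\to\mathbb R_+$ with $\sum_{s\in S}h(s)=1$. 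By \cref{strongdual} this equals $\rho^\ast$ and is attained at some such $h^\ast$.

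\emph{Concluding the dichotomy, and the main point.} If $\rho\le\rho^\ast$ then $g^\ast$ witnesses \eqref{bed_beta}, since $\sum_{v\in\tau(s)}g^\ast(v)\ge\rho^\ast\ge\rho$ for all $s$; and for every admissible $h$ one has $\max_v\sum_{s:\,v\in\tau(s)}h(s)\ge\rho^\ast\ge\rho$, so \eqref{bed_alpha} fails. If $\rho>\rho^\ast$ then $h^\ast$ witnesses \eqref{bed_alpha}, since $\sum_{s:\,v\in\tau(s)}h^\ast(s)\le\rho^\ast<\rho$ for all $v$; and for every admissible $g$ one has $\min_s\sum_{v\in\tau(s)}g(v)\le\rho^\ast<\rho$, so \eqref{bed_beta} fails. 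That the two alternatives are genuinely exclusive also follows directly, without duality: if $h$ and $g$ satisfied \eqref{bed_alpha} and \eqref{bed_beta}, then summing $h(s)g(v)$ over all pairs with $v\in\tau(s)$, grouped first by $s$ and then by $v$, yields a number that is $\ge\rho$ by \eqref{bed_beta} and $<\rho$ by \eqref{bed_alpha} (the latter strictly, since $g$ is a probability vector). I expect the only real care to lie in the bookkeeping of the two middle paragraphs: forcing the normalized min--max problem into the exact one-sided, non-negative-variable template of \cref{strongdual} (handling the free variable $t$ and the equality constraint), checking primal feasibility and boundedness so that strong duality genuinely applies, and justifying the rescaling that turns ``$\sum_s h(s)\ge 1$'' into ``$\sum_s h(s)=1$''.
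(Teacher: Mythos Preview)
Your proof is correct. Both you and the paper reduce the dichotomy to LP strong duality applied to the $0/1$ incidence matrix $(\mathbb 1_{\{v\in\tau(s)\}})_{v\in V,\,s\in S}$, so the approach is essentially the same; the difference is in how the LP is set up. The paper keeps $\rho$ as a fixed parameter in the constraints and maximizes the total $h$-mass (dually: minimizes the total $g$-mass), then compares the common optimum $c^{*\prime}_\rho=b^*_\rho$ to $1$. You instead normalize the mass to $1$ and optimize the threshold, obtaining directly the critical value
\[
\rho^\ast \;=\; \max_{g}\,\min_{s\in S}\sum_{v\in\tau(s)}g(v)
\;=\; \min_{h}\,\max_{v\in V}\sum_{s:\,v\in\tau(s)}h(s),
\]
and then compare $\rho$ to $\rho^\ast$. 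The two formulations are rescalings of one another (the paper's $c^{*\prime}_\rho\le 1$ is exactly your $\rho\le\rho^\ast$). Your version has the minor advantage that it names $\rho^\ast$ explicitly and sidesteps the paper's Case~2 boundary argument (where one must observe that an optimal $h\in M'_\rho$ with $\sum_s h(s)=1$ has a tight constraint); the paper's version avoids the bookkeeping of encoding an equality constraint and a free variable within the one-sided template of \cref{strongdual}. Your double-counting proof of mutual exclusivity is a nice self-contained addition not present in the paper.
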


\begin{proof}
Let $n = |S|$, $m = |V|$, $c = (\rho,\dots,\rho)^T \in \mathbb R_+^n$, $b = (\rho,\dots,\rho)^T \in \mathbb R_+^m$ and $A \in M(n \times m, \mathbb R)$ with entries $a_{vs} = \mathbb 1_{\{v \in \tau(s)\}}$ for $v \in V$ and $s \in S$.
Consider the linear optimization problem
\begin{equation}
\label{primal}
c^{*'}_\rho := \max_{x \in \mathbb R_+^n: Ax \le b} c^Tx = \max_{h \in M_{\rho}'}\sum_{s \in S} h(s)
\end{equation}
for
\begin{equation}
\label{M'}
M_{\rho}' := \left\{h \in \mathbb R_+^S : \sum_{s \in S: \text{} v \in \tau(s)} h(s) \le \rho, \forall v \in V\right\}.
\end{equation}
The dual of \eqref{primal} is
\begin{equation}
\label{dual}
b^* := \min_{y \in \mathbb R_+^m: A^T y \ge c} b^Ty = \min_{g \in N_{\rho}}\sum_{v \in V} g(v)
\end{equation}
for 
\begin{equation}
\label{N}
N_{\rho} := \left\{g \in \mathbb R_+^V : \sum_{v \in V: \text{} v \in \tau(s)} g(s) \ge \rho, \forall s \in S\right\}.
\end{equation}
Both linear optimization problems have a feasible solution given by the constant functions $\rho^S \in M'_\rho$ and $\rho^V \in N'_\rho$.
Further ${c^*_\rho}'$ is bounded from above by ${c^*_\rho}' \le \rho |S|$ and $b^*$ is bounded from below by $b^* \ge \rho |V|$.
So the simplex algorithm finds an optimal solution for each of them \cite[ch. 1]{fang1993}.
Hence, by \cref{strongdual}, ${c^*_\rho}' = b^*_\rho$.
Thus, there are the following two cases:

Case 1: ${c^*_\rho}' = b^*_\rho > 1$.
Then for all functions $g \in \mathbb R_+^V$ satisfying \eqref{bed_beta}, ${\sum_{v \in V}g(v) > 1}$.
So there is no guiding function $g$ with reliability $\rho$.
But
\begin{equation*}
h^*:= \frac 1 {c_\rho^*} \argmax_{h \in M_\rho'}\sum_{s \in S} h(s)
\end{equation*}
satisfies $\sum_{s \in S} h^*(s) = 1$ and condition $\eqref{bed_alpha}$ (for $h^*$ instead of $h$).

Case 2: ${c^*_\rho}' = b^*_\rho \le 1$.
Then
\begin{equation}
g^*:= \frac 1 {b_\rho^*} \argmin_{g \in N_\rho}\sum_{v \in V} g(v)
\end{equation}
is a guiding function with reliability $\rho$.
But for all $h \in M_\rho'$, $\sum_{s \in S} h(s) \le 1$.
Hence, if $h \in M_\rho'$ and $\sum_{s \in S} h(s) = 1$ then there is at least one point $v \in V$ with $\sum_{s \in S: \text{} v \in \tau(s)} h(s) = \rho$.
Therefore, for all $h \in M_\rho$, we have $\sum_{s \in S} h(s) < 1$.
This finally implies that there is no $h \in \mathbb R_+^S$ satisfying $\sum_{s \in S} h(s) = 1$ and \eqref{bed_alpha} which completes the proof.
\end{proof}

Note that a tangle $\tau$ has a guiding function with reliability $\rho$ if and only if
\begin{equation}
\label{opt_probl}
c^*_\rho := \min_{g \in N_\rho} \sum_{v \in V} g(v) \le 1
\end{equation}
for $N_\rho$ as defined in \eqref{N}.
Notice further that the maximum reliability of a guiding function of $\tau$ is that $\rho$ for which $c^*_\rho = 1$.
Therefore, in order to find a guiding function of $\tau$ with maximum reliability, we can solve the linear optimization problem \eqref{opt_probl} for any $\rho \in (0,1]$.
Then, as this optimization problem is linear, a guiding function with maximum reliability is given by
\begin{equation}
g := \frac 1 {c^*_\rho}\argmin_{g \in N_\rho} \sum_{v \in V} g(v).
\end{equation}

Linear optimization problems can be solved in polynomial time for example by using an interior point method (see e.g.~\cite{karmarkar1984}).
Thus, if existing, a guiding function with maximum reliability can always be computed in polynomial time in the size of $S$.
However, this does not have to be the case for a guiding set.
But we can, under certain circumstances, conclude the existence of a guiding set from the existence of a guiding function, by using a probabilistic argument (see \cref{existguid}).

\subsection{Guiding sets of tangles}
\label{existguid}

In this section, we use a probabilistic argument to show that a tangle $\tau$ has a guiding set under certain assumptions on $\tau$.
For this, we fix a guiding function $g$ of a tangle $\tau$ and consider a random subset $G \subset V$ that contains the point $v \in V$ with probability $g(v)$.
We show that under certain assumptions on $g$ this implies that, with probability greater than zero, $G$ is a guiding set of $V$.
So it is possible to choose $G \subset V$ such that $G$ guides V.

Recall that a random variable $X$ on some probability space $(\Omega, \mathcal A, \mathbb P)$ is \emph{Bernoulli distributed} with parameter $p$ if $\mathbb P (X = 1) = p$ and $\mathbb P (X = 0) = 1-p$. We denote this by $X \sim \text{Ber}(p)$.
The expected value of a Bernoulli-distributed random variable $X$ is given by $\mathbb E[X] = p$ and its variance calculates to $\text{Var}(X) = p(1-p)$.

\begin{theorem}
\label{guidfctsetthm}
Let $S$ be a set of separations of any set $V$ and let $\tau$ be a tangle of $S$. Suppose that $\tau$ has a guiding function $g \in \mathbb R_+^V$ with reliability $\rho := \min_{A \in \tau} \rho_{A}\in (\frac 1 2, 1]$
where $\rho_{A} := \sum_{v \in A} g(v)$, $A \in \tau$. Let $\eta := \frac 1 {\max_{v \in V}g(v)}$ and suppose further that
\begin{equation}
\label{assumption_prob}
\frac {\sum_{v \in V} g(v) (1-g(v) \eta)}{4\eta}\cdot\sum_{A \in \tau_{\min}} \frac{1}{(\rho_{A}  - \tfrac 1 2)^2} < 1
\end{equation}
where $\tau_{\min} \subset \tau$ denotes the set of all minimal oriented separations in $\tau$. 
Then $\tau$ has a guiding set $G \subset V$.
\end{theorem}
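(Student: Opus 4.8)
The plan is to run the probabilistic method with the independent "random subset" $G\subset V$ indicated before the theorem. Concretely, set $\eta:=1/\max_{v\in V}g(v)$ as in the statement; since $g\ge 0$ and $\sum_{v\in V}g(v)=1$ we have $g(v)\le 1$ for all $v$, hence $0\le \eta g(v)\le 1$, so $\eta g(v)$ is a legitimate probability. Let $(X_v)_{v\in V}$ be independent random variables with $X_v\sim\mathrm{Ber}(\eta g(v))$ on a common probability space, and put $G:=\{v\in V:X_v=1\}$, so that $|G\cap A|=\sum_{v\in A}X_v$ for every $A\subset V$. The goal is to show that with positive probability $G$ is a guiding set of $\tau$, i.e.\ $|G\cap A|>\tfrac12|G|$ for all $A\in\tau$.

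First I would reduce the condition to the minimal oriented separations. For every $A\in\tau$ the finite partial order $(\ora S,\subseteq)$ has a minimal element $A_0\in\tau_{\min}$ with $A_0\subseteq A$, whence $|G\cap A|\ge|G\cap A_0|$; consequently, if $|G\cap A_0|>\tfrac12|G|$ for all $A_0\in\tau_{\min}$, then the same inequality holds for all $A\in\tau$ and $G$ is a guiding set (in particular the surviving $G$ is automatically non-empty, since then $|G\cap A_0|>0$). So it suffices to show that the bad events $B_A:=\{\,|G\cap A|\le\tfrac12|G|\,\}$, $A\in\tau_{\min}$, simultaneously fail with positive probability.

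Next I would bound $\mathbb P(B_A)$ for fixed $A\in\tau_{\min}$ via Chebyshev's inequality applied to $Z_A:=|G\cap A|-|G\cap A^c|=\sum_{v\in A}X_v-\sum_{v\in A^c}X_v$, noting $B_A=\{Z_A\le 0\}$ because $|G|=|G\cap A|+|G\cap A^c|$. Using $\mathbb E[X_v]=\eta g(v)$, independence, and $\sum_{v\in A}g(v)=\rho_A$, one gets $\mathbb E[Z_A]=\eta(2\rho_A-1)=2\eta(\rho_A-\tfrac12)>0$ and $\mathrm{Var}(Z_A)=\sum_{v\in V}\mathrm{Var}(X_v)=\eta\sum_{v\in V}g(v)(1-\eta g(v))$, which is independent of $A$. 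Hence
\begin{equation*}
\mathbb P(B_A)=\mathbb P\!\left(Z_A-\mathbb E[Z_A]\le-\mathbb E[Z_A]\right)\le\frac{\mathrm{Var}(Z_A)}{(\mathbb E[Z_A])^2}=\frac{\sum_{v\in V}g(v)(1-g(v)\eta)}{4\eta\,(\rho_A-\tfrac12)^2}.
\end{equation*}
Summing over the finite set $\tau_{\min}$ with the union bound gives $\mathbb P\!\left(\bigcup_{A\in\tau_{\min}}B_A\right)\le\frac{\sum_{v\in V}g(v)(1-g(v)\eta)}{4\eta}\sum_{A\in\tau_{\min}}\frac{1}{(\rho_A-\tfrac12)^2}$, which is $<1$ exactly by hypothesis \eqref{assumption_prob}. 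Therefore with positive probability none of the $B_A$ occurs, so some concrete $G\subset V$ satisfies $|G\cap A|>\tfrac12|G|$ for all $A\in\tau_{\min}$, and by the reduction above this $G$ is the desired guiding set.

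The moment computations and the Chebyshev/union-bound steps are routine; the points that genuinely need care are that $\eta g(v)$ is a valid probability (which is where $\sum_v g(v)=1$ enters) and the reduction from $\tau$ to $\tau_{\min}$ together with the remark that the surviving set is non-empty. Note also that the constant in \eqref{assumption_prob} is precisely the union-bound quantity, so the hypothesis is used without slack — this is the natural place where the argument could fail if the assumption were weakened.
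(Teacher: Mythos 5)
Your proof is correct and follows essentially the same probabilistic argument as the paper: your $Z_A=|G\cap A|-|G\cap A^c|$ equals $2\bigl(Y_A-\tfrac12|G|\bigr)$ in the paper's notation, so the moment computations and the Chebyshev/union-bound step yield the identical per-separation bound $\tfrac{\sum_v g(v)(1-g(v)\eta)}{4\eta(\rho_A-1/2)^2}$ and the same conclusion. Your explicit reduction from $\tau$ to $\tau_{\min}$ and the remark that the surviving $G$ is automatically non-empty are useful clarifications that the paper leaves implicit (note, though, that $\eta g(v)\le 1$ already follows from the definition $\eta=1/\max_v g(v)$ alone; $\sum_v g(v)=1$ is rather used in the computation of $\mathbb E[Z_A]$ via $\sum_{v\in A^c}g(v)=1-\rho_A$).
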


\begin{proof}
Note that the definition of $\eta$ assures $g(v) \eta \in [0,1]$ for all $v \in V$.
Let
\begin{equation}
\label{defXv}
X_v \sim \text{Ber}(g(v) \eta), \quad v \in V
\end{equation}
be independent Bernoulli distributed random variables on some probability space $(\Omega, \mathcal A, \mathbb P)$. Then
\begin{equation}
\label{defD}
G = \{v: X_v = 1\}
\end{equation}
is a random subset of $V$.
Let
\begin{equation}
\label{defYs}
Y_{A} = \sum_{v \in G} \mathbb 1_{\{v \in A\}}, \quad A \in \tau_{\min}.
\end{equation}
If we can show that
\begin{equation}
\label{zzprob}
 \mathbb P(Y_{A} - \tfrac 1 2|G| > 0, \quad \forall A \in \tau_{\min}) > 0
\end{equation}
then there is some $\omega \in \Omega$ for which $G(\omega)$ is a guiding set of $\tau$.
Therefore, we fix $A \in \tau_{\min}$ and compute the expected value of $Y_{A} - \tfrac 1 2|G|$
\begin{align}
\label{Ew}
\mathbb E[Y_{A} - \tfrac 1 2|G|]
&\overset{\eqref{defYs}}{=} \mathbb E \left[\sum_{v \in G}\left(\mathbb 1_{\{v \in A\}} - \tfrac 1 2 \right) \right]\nonumber\\
&\overset{\eqref{defD}}{=} \sum_{v \in V}\mathbb P(X_v = 1) \left(\mathbb 1_{\{v \in A\}} - \tfrac 1 2 \right)\nonumber\\
&\overset{\eqref{defXv}}{=} \sum_{v \in V} g(v) \eta \left(\mathbb 1_{\{v \in A\}} - \tfrac 1 2\right)\nonumber\\
&= \left(\rho_{A}  - \tfrac 1 2\right)\eta > 0.
\end{align}
Next, we compute the variance of $Y_{A} - \tfrac 1 2|G|$. In the third step of the following calculation, we use that the random variables $X_v$ are independent and, hence, $\text{Var}(\sum_{v \in V} X_v) = \sum_{v \in V} \text{Var}(X_v)$. And in the fourth step, we use that for $a \in \mathbb R$ and a random variable $Z$, $\text{Var}(aZ) = a^2\text{Var}(Z)$.
\begin{align}
\label{Var}
\text{Var}(Y_{A} - \tfrac 1 2|G|)
&\overset{\eqref{defYs}}{=} \text{Var} \left(\sum_{v \in G}\left(\mathbb 1_{\{v \in A\}} - \tfrac 1 2\right) \right)\nonumber\\
&\overset{\eqref{defD}}{=} \text{Var} \left(\sum_{v \in V}X_v \left(\mathbb 1_{\{v \in A\}} - \tfrac 1 2\right) \right)\nonumber\\
&= \sum_{v \in V} \text{Var} \left(X_v\left(\mathbb 1_{\{v \in A\}} - \tfrac 1 2\right)\right)\nonumber\\
&= \tfrac 1 4 \sum_{v \in V} \text{Var} (X_v)\nonumber\\
&\overset{\eqref{defXv}}{=} \tfrac 1 4 \sum_{v \in V} g(v) \eta (1-g(v) \eta).
\end{align}
Now we can use Chebychev's inequality to obtain
\begin{align*}
\mathbb P \left(Y_{A} - \tfrac 1 2|G| \le 0 \right)
&\le \mathbb P\left(\left|Y_{A} - \tfrac 1 2|G| - \mathbb E[Y_{A} - \tfrac 1 2|G|]\right| \ge \mathbb E[Y_{A} - \tfrac 1 2|G|]\right)\\
&\le \frac{\text{Var}(Y_{A} - \tfrac 1 2|G|)}{\mathbb E[Y_{A} - \tfrac 1 2|G|]^2}\\
&\le \frac{\tfrac 1 4 \sum_{v \in V} g(v) \eta(1-g(v) \eta)}{((\rho_{A}  - \tfrac 1 2) \eta)^2}\\
&= \frac{\sum_{v \in V} g(v) (1-g(v) \eta)}{4(\rho_{A}  - \tfrac 1 2)^2 \eta}.
\end{align*}
Finally, we can show \eqref{zzprob}:
\begin{align*}
\mathbb P(Y_{A} - \tfrac 1 2|G| > 0, \quad \forall A \in \tau_{\min})
&\ge 1 - \sum_{A \in \tau_{min}} \mathbb P(Y_{A} - \tfrac 1 2|G| \le 0)\\
&= 1 - \frac {\sum_{v \in V} g(v) (1-g(v) \eta)}{4\eta}\cdot\sum_{A \in \tau_{\min}} \frac{1}{(\rho_{A}  - \tfrac 1 2)^2} \\
&\overset{\eqref{assumption_prob}}{>} 0.
\end{align*}
\end{proof}

Condition \eqref{assumption_prob} clearly holds if $g$ already is a guiding set, i.e. if $g(v) = \frac 1 {|G|} \mathbb 1_{\{v \in G\}}$ for some $G \subset V$:
Then $\eta = \frac 1 {\max_{v \in V}g(v)} = |G|$ and, thus, the left side of \eqref{assumption_prob} is zero.
Non-trivial cases where \eqref{assumption_prob} holds might especially occur when $g$ has high reliability and when $|V|$ is much bigger than the number of minimal oriented separations in $\tau$.
The following corollary yields a simplified version of \cref{guidfctsetthm}.

\begin{cor}
Let $S$ be a set of separations of any set $V$ of $n$ points and let $\tau$ be a tangle of $S$. Suppose that $\tau$ has a guiding function $g \in \mathbb R_+^V$ with reliability $\rho = \frac 1 2 + \delta$ for some $\delta \in (0,\frac 1 2]$.
Let $\epsilon := \frac 1 {n \max_{v \in V g(v)}}$ and suppose further that
\begin{equation}
|\tau_{\min}| < 4 \delta^2 \epsilon^2 |V|
\end{equation}
where $\tau_{\min} \subset \tau$ denotes the set of all minimal oriented separations in $\tau$. 
Then $\tau$ has a guiding set $G \subset V$.
\end{cor}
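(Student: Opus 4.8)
The plan is to obtain this as a direct consequence of \cref{guidfctsetthm}: I would simply verify that the hypotheses of the corollary imply the technical condition \eqref{assumption_prob}, after which \cref{guidfctsetthm} delivers the guiding set. Write $\eta := \tfrac{1}{\max_{v \in V} g(v)}$ as in \cref{guidfctsetthm}. Two preliminary observations: from the definition $\epsilon = \tfrac{1}{n\max_{v \in V} g(v)}$ one reads off $\eta = n\epsilon$; and since $\sum_{v \in V} g(v) = 1$ with $|V| = n$, the largest weight satisfies $\max_{v \in V} g(v) \ge \tfrac 1n$, hence $n \max_{v \in V} g(v) \ge 1$ and therefore $\epsilon \le 1$.

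Next I would estimate the two factors on the left-hand side of \eqref{assumption_prob} separately. For the first factor, the choice of $\eta$ guarantees $g(v)\eta \in [0,1]$, so $0 \le 1 - g(v)\eta \le 1$ and
\[
\sum_{v \in V} g(v)\bigl(1 - g(v)\eta\bigr) \;\le\; \sum_{v \in V} g(v) \;=\; 1 ,
\]
which bounds the first factor by $\tfrac{1}{4\eta} = \tfrac{1}{4n\epsilon}$. For the second factor, $\tau_{\min} \subset \tau$ gives $\rho_A \ge \rho = \tfrac 12 + \delta$ for every $A \in \tau_{\min}$, so $\rho_A - \tfrac 12 \ge \delta$ and
\[
\sum_{A \in \tau_{\min}} \frac{1}{\bigl(\rho_A - \tfrac 12\bigr)^2} \;\le\; \frac{|\tau_{\min}|}{\delta^2}.
\]

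Putting these together, the left-hand side of \eqref{assumption_prob} is at most $\tfrac{|\tau_{\min}|}{4 n \epsilon \delta^2}$, and the hypothesis $|\tau_{\min}| < 4\delta^2 \epsilon^2 |V| = 4\delta^2 \epsilon^2 n$ makes this strictly smaller than $\tfrac{4\delta^2\epsilon^2 n}{4 n \epsilon \delta^2} = \epsilon \le 1$. Hence \eqref{assumption_prob} holds and \cref{guidfctsetthm} yields a guiding set $G \subset V$. I do not anticipate any real obstacle here: the only point requiring a little care is translating the definition of $\epsilon$ into the identity $\eta = n\epsilon$ and recording that $\epsilon \le 1$; beyond that the argument is a two-line estimate fed into \cref{guidfctsetthm}.
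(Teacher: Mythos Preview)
Your argument is correct and is precisely the intended derivation: the paper records the corollary with a bare \qed, meaning it is meant as an immediate specialization of \cref{guidfctsetthm} via exactly the two estimates you give (bounding $\sum_v g(v)(1-g(v)\eta)\le 1$ and $(\rho_A-\tfrac12)^{-2}\le \delta^{-2}$) together with the identity $\eta=n\epsilon$ and the observation $\epsilon\le 1$.
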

\qed
\section{Interplay of witnessing and guiding sets of tangles}
\label{interplay}

In this section, we deal with the combination of witnessing and guiding sets to tangle associated clusters by providing two counterexamples.
First, we give an example showing that the cluster candidates that Diestel~\cite[ch. 14.2]{diestel2024} suggested need not witness their tangle, i.e.\ that a set which is guiding a non-principal and non-fake tangle with maximum reliability does not necessarily witness this tangle.

\begin{example}
\label{fuzzyclusterce}
Let $n \ge 6$ with $3 | n$ and $k = \frac 2 3 n$. Let
\begin{equation}
\label{notbothwitn}
W:= \{T \subset \mathbb Z /n \mathbb Z = \{[0]_n,\dots,[n-1]_n\}: |T| = k-1\},
\end{equation}
\begin{equation}
\label{notbothguid}
G = \{\{[0]_n+j,\dots, [k]_n+j\}: j \in \mathbb Z/ n \mathbb Z\}
\end{equation}
and $V = W \cup G$.
And let $A_j := \{v \in V: j \in v\}$ for $j \in \mathbb Z/ n \mathbb Z$.
Then $\tau := \{A_j: j \in \mathbb Z/ n \mathbb Z\}$ is a tangle of $S := \{\{A_j,A_j^c\}: j \in \mathbb Z/ n \mathbb Z\}$, $W$ is a witnessing set of $\tau$ and $G$ is a guiding set of $\tau$.
By \cref{nonfakeprop}, $\tau$ extends to an $n$-tangle $\tau^*$ of the set $S^*$ of all separations of $V$ regarding to some order function $f^*$. And since the maximal oriented separations are preserved, $W$ is still a witnessing set of $\tau^*$ and $G$ a guiding set of $\tau^*$.
The reliability $\rho_G$ of $G$ is maximal because the reliability $\rho_H$ of any non-empty set $H \subset V$ satisfies
\begin{align*}
\rho_H = \min_{j \in \mathbb Z/ n \mathbb Z} \frac{|A_{[j]_n} \cap H|}{|H|}
&\le \frac 1 n \sum_ {j \in \mathbb Z/ n \mathbb Z} \frac{A_{[j]_n} \cap H}{|H|} \\
&= \frac 1 n \frac 1 {|H|} \sum_ {j \in \mathbb Z/ n \mathbb Z} \sum_{v \in H} \mathbb 1_{\{v \in A_{[j]_n}\}} \\
&= \frac 1 n \frac 1 {|H|} \sum_{v \in H} \sum_ {j \in \mathbb Z/ n \mathbb Z} \mathbb 1_{\{j \in v\}} \\
&\le \frac {|H|k}{n \cdot |H|} = \frac k n = \rho_G. 
\end{align*}
As $\tau^*$ is a $n$-tangle of the set $S^*$ of all separations of $V$, $\tau^*$ is not fake.
Since there is no vertex $v \in V$ witnessing all triples of oriented separations in $\ora {S^*}$, $\tau^*$ is non-principal.
So $G$ guides the non-principal and non-fake tangle $\tau^*$ with maximum reliability.
But $G$ is not witnessing $\tau^*$ as, for instance, the oriented separation triple $\{A_{[1]_n},A_{[\frac n 3 + 1]_n},A_{[\frac {2n}3 + 1]_n}\}$ is not witnessed by $G$.
\end{example}

This suggests that requiring the aforementioned cluster candidates to witness their tangle, too, can be a reasonable addition.
But this could lead to ruling out these cluster candidates: The next example shows that there are tangles that have a witnessing set (which is a proper subset of the groundset $V$) and a guiding set but no set that is both witnessing and guiding.

\begin{example}
\label{notboth}
Let $k \ge 5$ and $n = 2k - 1$ with $3|n$.
Then we can define 
\begin{equation}
W:= \{T \subset \mathbb Z /n \mathbb Z = \{[0]_n,\dots,[n-1]_n\}: |T| = 3\},
\end{equation}
Further we define $G$, $V$, $S$, $\tau$ and $\tau^*$ exactly as in \cref{fuzzyclusterce}.
So again, $\tau^*$ is a non-principal non-fake tangle of the set $S^*$ of all separations of $V$, $W$ is a witnessing set of $\tau^*$ and $G$ a guiding set of $\tau^*$.

Suppose there would be a set $U \subset V$ that is both witnessing and guiding $\tau^*$.
Consider the set
\begin{equation*}
\{\{A_{[0]_n}+j,A_{[\frac 1 3 n + 1]_n}+j,A_{[\frac 2 3 n + 2]_n}+j\}: j \in \mathbb Z/ n \mathbb Z\}\}
\end{equation*}
of oriented separation triples.
No separation triple in this set is witnessed by a point in $G$.
Hence, $U$ must contain at least $n$ points from $W$.
But then, for the reliability $\rho_U$ of $U$ we get
\begin{align*}
\rho_U = \min_{j \in \mathbb Z/ n \mathbb Z} \frac{|A_{[j]_n} \cap U|}{|U|}
&\le \frac 1 n \sum_ {j \in \mathbb Z/ n \mathbb Z} \frac{|A_{[j]_n} \cap U|}{|U|} \\
&= \frac 1 n \frac 1 {|U|} \sum_ {j \in \mathbb Z/ n \mathbb Z} \sum_{v \in U} \mathbb 1_{\{v \in A_{[j]_n}\}} \\
&= \frac 1 n \frac 1 {|U|} \sum_{v \in U} \sum_ {j \in \mathbb Z/ n \mathbb Z} \mathbb 1_{\{j \in v\}} \\
&\le \frac {3n +kn}{n \cdot 2n} 
= \frac {3 + \frac {n+1} 2}{2n} = \frac {n+7}{4n} \le \frac{16}{36} < \frac 1 2.
\end{align*}
Thus, $|U|$ cannot be a guiding set of $\tau^*$.
This shows that there is no set $U \subset V$ both witnessing and guiding $\tau^*$.
\end{example}

\section*{Acknowledgements}
I want to thank Sandra Albrechtsen for many helpful comments and suggestions.

\bibliography{References.bib}
\bibliographystyle{plain}

\end{document}